\documentclass[reqno]{amsart}

\usepackage{amssymb,amsfonts,amsmath}

\newtheorem{theorem}{Theorem}
\newtheorem{lemma}{Lemma}
\newtheorem{corollary}{Corollary}
\newtheorem{proposition}{Proposition}
\newtheorem{remark}{Remark}

\DeclareMathOperator{\Aut}{Aut}
\DeclareMathOperator{\IA}{IA}
\DeclareMathOperator{\GL}{GL}

\newcommand{\fin}{\mathrm{fin}}

\title[Tame Automorphisms in Countably Infinite Rank]{Tame Automorphisms of Free Nilpotent Lie Algebras of Countably Infinite Rank}

\author{C. E. Kofinas}
\address{University of the Aegean, Department of Mathematics, Karlovassi GR-83200, Samos, Greece}
\email{kkofinas@aegean.gr}

\begin{document}

\begin{abstract}
Let $L_{\infty}$ be a free Lie algebra of countably infinite rank over a field of
characteristic $0$ and let $L_{\infty, c}$ be the free nilpotent Lie algebra of countably infinite rank and class $c$. We prove that every automorphism of $L_{\infty, c}$ is induced by an automorphism of $L_{\infty}$.

\medskip

\noindent Mathematics Subject Classification (2020): Primary 17B40; Secondary 17B01, 17B30.

\smallskip

\noindent Key words and phrases: (relatively) free nilpotent Lie algebra; countably infinite rank; automorphism group; tame automorphism; lifting of automorphisms; finitary automorphisms.
\end{abstract}

\maketitle

\section{Introduction}\label{intro}

Let $K$ be a field of characteristic $0$ and let $L$ be a free Lie algebra over $K$. An 
ideal $V \subseteq L$ is called fully invariant if $\varphi(V) \subseteq V$ for every 
endomorphism $\varphi$ of $L$. In this case, each $\phi \in \Aut(L)$ induces an 
automorphism $\overline{\phi} \in \Aut(L/V)$ and hence yields a natural group 
homomorphism
\[
\widetilde{\rho}_{V} \colon \Aut(L) \longrightarrow \Aut(L/V), \quad \phi \mapsto \overline{\phi}.
\]
An automorphism of $L/V$ is called \emph{tame} if it lies in the image of 
$\widetilde{\rho}_{V}$; equivalently, if it lifts to an automorphism of $L$. Otherwise, 
it is called \emph{non-tame} (or \emph{wild}). This raises the following question:
\begin{quote}\itshape
Given a fully invariant ideal $V$ of $L$, is every automorphism of $L/V$ tame?
\end{quote}

In general, lifting problems for automorphisms of relatively free algebras may fail; 
general obstructions are described by Bryant--Drensky \cite{brdr}. Drensky 
\cite{dr92} constructed wild automorphisms for certain relatively free nilpotent-by-abelian 
Lie algebras of finite rank. Non-tame automorphisms also occur in free 
polynilpotent Lie algebras of finite rank; see Papistas \cite{pap93}. Further structural 
results on automorphism groups of relatively free algebras of countably infinite rank 
were obtained in \cite{brro}. More recently, Umirbaev \cite{umir2} obtained new 
results on tame and almost tame automorphisms of free metabelian Lie algebras of 
rank $n \geq 4$; see also \cite{umir1} for remarks on earlier proofs. 

The automorphism groups of relatively free nilpotent groups and Lie
algebras of finite rank may contain non-tame elements; see, for example, \cite{bglm} and 
\cite{drgu}. In contrast,
Bryant--Macedo\'nska \cite{brma} proved that every automorphism of a
relatively free nilpotent group of infinite rank is tame. For further background on tameness in the 
infinite-rank group case, see Roman'kov \cite{rom}. To the best of our knowledge, for 
free nilpotent Lie algebras of
countably infinite rank, the analogous tameness problem has remained
open.

We establish a Lie algebra analogue of the Bryant--Macedo\'nska result for countably 
infinite rank. In this setting, we construct controlled lifts and use them 
to obtain a global lift. Let $L_{\infty}$ denote the free Lie algebra of countably infinite 
rank over $K$, and let $L_{\infty, c} = L_{\infty}/\gamma_{c+1}(L_{\infty})$ be the 
free nilpotent Lie algebra of class $c$. We prove that every automorphism of 
$L_{\infty, c}$ is tame. As an application, we show that if $V$ is a fully invariant ideal 
of $L_{\infty}$ such that $L_{\infty}/V$ is nilpotent, then every automorphism of 
$L_{\infty}/V$ is tame.

\begin{theorem}\label{the4}
Let $L_{\infty, c}$ be the free nilpotent Lie algebra of countably infinite rank and 
class $c$. 
Then every automorphism of $L_{\infty, c}$ lifts to an automorphism of $L_{\infty}$.
\end{theorem}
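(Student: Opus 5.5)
Fix a free generating set $x_1,x_2,\dots$ of $L_\infty$ and write $y_i$ for its image in $L_{\infty,c}$. We argue by induction on $c$. For $c=1$ the algebra $L_{\infty,1}$ is abelian and has basis the $y_i$, and its automorphisms are the invertible column-finite matrices of infinite size. Such a map lifts by linearity: send $x_i$ to the element of $L_\infty$ given by the same linear combination of the $x_j$. This lift has a two-sided inverse given by the inverse matrix, and composites of linear maps are computed by the same matrix products. The same remark handles the linear part in general: for $\alpha\in\Aut(L_{\infty,c})$, the induced map on $L_{\infty,c}/L_{\infty,c}^2$ lifts to a linear automorphism of $L_\infty$. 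After composing $\alpha$ with the image of that lift, we may assume $\alpha$ is IA, that is, $\alpha(y_i)\equiv y_i$ modulo $L_{\infty,c}^2$ for all $i$.

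Inductive step. By induction, the automorphism induced by $\alpha$ on $L_{\infty,c-1}$ lifts to some $\psi\in\Aut(L_\infty)$. Then $\beta=\widetilde\rho(\psi)^{-1}\alpha$ acts trivially modulo $\gamma_c$, so $\beta(y_i)=y_i+u_i$ with $u_i\in\gamma_c(L_{\infty,c})$. Since $\gamma_c(L_{\infty,c})$ is central and is the degree-$c$ homogeneous component, $\beta$ is determined by the arbitrary sequence $(u_i)$. Conversely, any sequence $(u_i)$ defines such an automorphism, with inverse $y_i\mapsto y_i-u_i$. So it remains to show that every map $y_i\mapsto y_i+u_i$, with $u_i$ homogeneous of degree $c$, is tame. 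When $c=1$ there is nothing to do.

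Main step. A naive lift $x_i\mapsto x_i+\tilde u_i$, where $\tilde u_i$ is a degree-$c$ element of $L_\infty$ lifting $u_i$, is an endomorphism, but in infinite rank it need not be surjective. Each $u_i$ involves only finitely many $y_j$, but the dependency pattern can be arbitrary, including $y_i$ itself. The plan is to use the infinite rank to construct a controlled lift, in the spirit of Bryant--Macedo\'nska, by splitting $\beta$ into a product of finitely many pieces that each lift.

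First reduce to "triangular" pieces. Choose a partition of the indices into finite blocks $B_1,B_2,\dots$ so that, for $i\in B_k$, $u_i$ involves only variables from $B_1\cup\dots\cup B_{k+1}$. Such a partition exists by a greedy construction using finiteness of supports, though a cleaner choice may be needed. Then write $\beta$ as a product of automorphisms $\beta'$ and $\beta''$. The map $\beta'$ adds to $y_i$, for $i\in B_k$ with $k$ odd, a degree-$c$ element in the variables of $B_{k-1}\cup B_k\cup B_{k+1}$, and $\beta''$ does the same for even $k$. This is possible because degree-$c$ perturbations compose additively modulo $\gamma_{c+1}$.

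Each such piece is a direct product over disjoint finite windows, with the odd-indexed blocks perturbed and the even ones fixed. It lifts blockwise, but the lift still contains terms like $x_i\mapsto x_i+\tilde u_i(x_i,\dots)$ inside a single finite window, which may be non-invertible in $L_\infty$. To handle this, we use the abundance of free variables. Within a window, write the perturbation as a product of elementary pieces $x_i\mapsto x_i+w$ with $w$ not involving $x_i$; each such piece is an automorphism of $L_\infty$, with inverse $x_i\mapsto x_i-w$. The self-involving terms we absorb via a commutator trick: express a degree-$c$ term containing $y_i$ as $[\cdot,\cdot]$ of elements involving a fresh variable, which is then conjugated away. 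Fresh variables are allocated to each window disjointly; this is possible because there are infinitely many variables and each window needs only finitely many.

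Ensuring that these infinitely many finite elementary products can be assembled into a single well-defined automorphism of $L_\infty$, using supports pairwise disjoint across windows so that the infinite product converges, is the main obstacle. I would first try to arrange this by interleaving the original variables with the reserved fresh ones via an index relabeling automorphism, which is itself tame because it is linear.
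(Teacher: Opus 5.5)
\textbf{There is a genuine gap: the main step is not carried out, and the mechanism you propose for it cannot work.}

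Your opening reductions agree with the paper. These are the linear case $c=1$ and, via the induction hypothesis, the reduction to $\beta\in\IA(c,L_{\infty,c})$, i.e.\ $y_i\mapsto y_i+u_i$ with $u_i\in\gamma_c(L_{\infty,c})$.

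\textbf{A single self-involving term has no finitary lift.} You plan to absorb the self-involving terms of each window by a commutator trick that uses finitely many fresh variables, which are then restored. Take the simplest case, $\sigma\colon y_1\mapsto y_1+[y_1,y_2,\dots,y_c]$. Suppose $\Phi\in\Aut(L_\infty)$ induces $\sigma$ and moves only finitely many $\ell_i$. Choose $N$ with $\Phi(\ell_i)=\ell_i$ for $i>N$ and $\Phi(\ell_i)\in L(\ell_1,\dots,\ell_N)$ for $i\le N$. The retraction $r$ killing the $\ell_j$ with $j>N$ satisfies $r\Phi=\Phi r$. Hence $\Phi$ restricts to an automorphism of the free Lie algebra of rank $N$ inducing $\sigma$ on its class-$c$ quotient.

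This is impossible, because $\sigma$ is wild in every finite rank. By Cohn's theorem all automorphisms of $L(\ell_1,\dots,\ell_N)$ are tame, and tame automorphisms have constant Jacobian determinant over the free metabelian quotient. The Jacobian determinant of $\sigma$, however, is $1\pm t_2\cdots t_c$ modulo terms of degree at least $c$. So no window can be treated with finitely many fresh variables.

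This is exactly what the paper's Proposition~\ref{propo2} is for. For $c\geq3$, the group commutator $\eta_1=\xi_1^{-1}\beta_1^{-1}\xi_1\beta_1$ produces the desired term on $y_1$. It also leaves the self-involving error $-[y_{c+1},y_3,\dots,y_c,y_2]$ on the fresh generator. Then $\eta_2$ cancels that error and creates a new one on $y_{c+2}$, and so on indefinitely. The infinite telescoping product becomes a genuine automorphism $\eta_{\mathrm o}\eta_{\mathrm e}$, because each factor is a product of automorphisms with pairwise disjoint supports $\Psi_n$, all fixing $\ell_2,\dots,\ell_c$.

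\textbf{Consequently there is no supply of fresh generators in your scheme.} Each self-involving term consumes infinitely many generators that must be fixed modulo $\gamma_{c+1}(L_\infty)$. Since $\beta$ may move every $y_i$, none are available. Relabelling by a permutation only conjugates $\beta$ and creates none.

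Separately, your banded claim is false. For $u_i=[y_i,y_1]$, every $u_i$ involves $y_1$, so the greedy partition only gives dependence on $B_1\cup\dots\cup B_{k+1}$. That lower-triangular shape would be enough for an odd/even infinite product if the window lifts moved only finitely many generators, but by the argument above they cannot.

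\textbf{How the paper gets around this.} It never lifts $\beta$ in one stroke; it uses three ingredients.
\begin{enumerate}
\item Lemma~\ref{lem:controlled-top} writes a finitary truncation as a product of conjugates $g_\nu\theta_\nu g_\nu^{-1}$ of standard automorphisms, by finitary linear maps fixing a prescribed finite set $B$.
\item Proposition~\ref{prop:controlled-lifting} combines this with Proposition~\ref{propo2}. It lifts the truncation of $\alpha$ to a finite set $\Delta$ of generators of an \emph{arbitrary} free generating set, while fixing the finitely many already-treated generators indexed by $\Gamma$. The infinitely many remaining generators serve as the fresh supply.
\item Proposition~\ref{propo3} glues these partial lifts by a back-and-forth construction on finite primitive systems $U_n$ and $V_n=\theta_n(U_n)$. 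This makes both $\bigcup U_n$ and $\bigcup V_n$ free generating sets, so the limit map is an automorphism lifting $\alpha$.
\end{enumerate}

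Your proposal contains neither the infinite lift of a single standard automorphism nor a globalization scheme, and these are the substance of the proof.
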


\begin{corollary}\label{cor1}
Let $L_{\infty}$ be a free Lie algebra of countably infinite rank and let $V$ be a 
proper fully invariant ideal of $L_{\infty}$ such that $L_{\infty}/V$ is nilpotent. Then 
every automorphism of $L_{\infty}/V$ lifts to an automorphism of $L_{\infty}$.
\end{corollary}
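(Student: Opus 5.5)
The plan is to reduce Corollary~\ref{cor1} to Theorem~\ref{the4}: first lift a given automorphism of $L_{\infty}/V$ to an automorphism of $L_{\infty,c}$ for suitable $c$, then apply the theorem. Since $L_{\infty}/V$ is nilpotent, say of class $c$, we have $\gamma_{c+1}(L_{\infty}) \subseteq V$. Hence $L_{\infty}/V \cong L_{\infty,c}/W$ with $W = V/\gamma_{c+1}(L_{\infty})$, and $W$ is fully invariant in $L_{\infty,c}$.

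\emph{Step 1: $V \subseteq L_{\infty}^2$.} Suppose $v \in V$ has nonzero linear part. In characteristic $0$, a fully invariant ideal contains the multihomogeneous components of its elements. This follows from the Vandermonde argument, applied to substitutions $x_i \mapsto \lambda x_i$. So $V$ contains a nonzero linear element $\sum a_i x_i$. Applying a suitable endomorphism, which sends one $x_j$ with $a_j \neq 0$ to $a_j^{-1}x_1$ and the other generators to $0$, gives $x_1 \in V$. Full invariance then gives $x_i \in V$ for all $i$, so $V = L_{\infty}$, contradicting properness. Therefore $V \subseteq L_{\infty}^2$, and $W \subseteq L_{\infty,c}^2$.

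\emph{Step 2: lift $\psi$ to an endomorphism.} Let $\psi \in \Aut(L_{\infty}/V)$ and fix free generators $x_1, x_2, \dots$ of $L_{\infty,c}$. Choose $u_i \in L_{\infty,c}$ with $\psi(x_i + W) = u_i + W$, and let $\alpha$ be the endomorphism of $L_{\infty,c}$ with $x_i \mapsto u_i$. In the same way, choose $\beta$ lifting $\psi^{-1}$. Then $\theta = \alpha\beta$ and $\theta' = \beta\alpha$ induce the identity modulo $W$, hence modulo $L_{\infty,c}^2$ by Step 1. So $\theta(x_i) = x_i + w_i$ with $w_i \in L_{\infty,c}^2$.

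\emph{Step 3: $\theta$ and $\theta'$ are bijective.} I expect this to be the main point, because the rank is infinite and one cannot argue by dimension. Set $\delta = \theta - \mathrm{id}$, viewed as a linear map. For a Lie monomial of degree $k$ in the generators, expanding $\theta$ of it shows that $\delta$ maps $\gamma_k(L_{\infty,c})$ into $\gamma_{k+1}(L_{\infty,c})$. Since $\gamma_{c+1}(L_{\infty,c}) = 0$, it follows that $\delta^{c} = 0$. Hence $\theta = \mathrm{id} + \delta$ is a linear bijection with inverse $\sum_{j=0}^{c-1} (-\delta)^j$. Thus $\theta$ is an automorphism, and likewise $\theta'$. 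It follows that $\alpha$ is both injective and surjective, so $\alpha \in \Aut(L_{\infty,c})$, and $\alpha$ induces $\psi$ on $L_{\infty,c}/W \cong L_{\infty}/V$.

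\emph{Step 4: apply the theorem.} By Theorem~\ref{the4}, there is $\phi \in \Aut(L_{\infty})$ inducing $\alpha$ on $L_{\infty,c}$. Because $\gamma_{c+1}(L_{\infty}) \subseteq V$, this $\phi$ induces $\psi$ on $L_{\infty}/V$, which proves the corollary.
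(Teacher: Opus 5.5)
Your proof is correct and follows the paper's route. Like the paper, you show $V\subseteq L_{\infty}'$ by homogeneity, lift the automorphism of $L_{\infty}/V$ to an endomorphism of $L_{\infty,c}$, check that this endomorphism is an automorphism, and apply Theorem~\ref{the4}; the only difference is that the paper cites Lemma~\ref{lemm1} for the automorphism step, whereas your Step~3 proves the needed case directly by also lifting $\psi^{-1}$ and observing that $\alpha\beta$ and $\beta\alpha$ are the identity plus a nilpotent map.
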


\section{Preliminaries}\label{prel}

Let $\mathbb{N}$ denote the set of positive integers. Throughout,
$K$ is a field of characteristic $0$. All Lie algebras are taken over $K$. For a Lie 
algebra $M$ and $a, b \in M$, we write $[a, b]$ for the Lie commutator of $a$ and 
$b$. For $k \geq 3$ and $a_{1}, \dots, a_{k} \in M$, we use left-normed Lie
commutators, defined inductively by 
$[a_{1}, \dots, a_{k}] = [[a_{1}, \dots, a_{k-1}], a_{k}]$. For $i \in \mathbb{N}$, let $\gamma_{i}(M)$ denote the $i$-th 
term of the lower central series of $M$, and write $M^{\prime} = \gamma_{2}(M)$.

Let $L_{\infty}$ be the free Lie algebra of countably infinite rank freely generated by $\{\ell_{i} \colon i \in \mathbb{N}\}$. For $c \geq 1$, set $L_{\infty, c} = L_{\infty}/\gamma_{c+1}(L_{\infty})$ and write $y_{i} = \ell_{i} + \gamma_{c+1}(L_{\infty})$ for all $i \in \mathbb{N}$. Then $L_{\infty, c}$ is the free nilpotent Lie algebra of countably infinite rank and class $c$, freely generated by $\{y_{i} \colon i \in \mathbb{N}\}$. For $m \geq 1$, let $L_{\infty, c}^{m}$ denote the $K$-vector subspace of $L_{\infty, c}$ spanned by all Lie commutators of length $m$ in the elements $y_{i}$. In particular, $L_{\infty, c}^{1} = \langle y_{i} \colon i \in \mathbb{N} \rangle$. Since $L_{\infty, c}$ is free nilpotent over a field of characteristic $0$, it decomposes as $L_{\infty, c} = \bigoplus_{m=1}^{c} L_{\infty, c}^{m}$.

Let $A_{\infty} = L_{\infty, c}/L_{\infty, c}^{\prime}$ and write $\bar{y}_{i} = y_{i} + L_{\infty, c}^{\prime}$ for all $i \in \mathbb{N}$. Then the set $\{\bar{y}_{i} \colon i \in \mathbb{N}\}$ is a basis of the countably infinite-dimensional $K$-vector space $A_{\infty}$. 
Set
\[
\begin{aligned}
\GL_{\infty}(K) &= \Aut_{K}(A_{\infty}), \\
G_{\infty} &= \{\beta \in \GL_{\infty}(K) \colon \beta(\bar{y}_{i}) = \bar{y}_{i} \text{ for all but finitely many } i\}.
\end{aligned}
\]
Since $L_{\infty, c}$ is freely generated by $\{y_i \colon i \in \mathbb{N}\}$, every $\beta \in \GL_{\infty}(K)$ extends to an automorphism of $L_{\infty, c}$. Thus $\GL_{\infty}(K)$ embeds naturally into $\Aut(L_{\infty, c})$, and we identify both $\GL_{\infty}(K)$ and $G_{\infty}$ with their images. The group $\GL_{\infty}(K)$ acts on the $K$-vector space $A_{\infty}$, and this action extends diagonally to $L_{\infty, c}$ by
\[
g\bigl([y_{i_{1}}, \dots, y_{i_{k}}]\bigr) = [g(y_{i_{1}}), \dots, g(y_{i_{k}})]
\qquad (g \in \GL_{\infty}(K),\ i_{1}, \dots, i_{k} \in \mathbb{N}).
\]

\section{Automorphisms of Free Nilpotent Lie Algebras}\label{sec3}

\subsection{IA-Automorphisms}

The following results are well-known for free nilpotent Lie algebras.

\begin{lemma}\label{lemm1}
An endomorphism of $L_{\infty, c}$ is an automorphism if and only if it induces an automorphism of $L_{\infty, c}/L_{\infty, c}^{\prime}$.
\end{lemma}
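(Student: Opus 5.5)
The forward direction is immediate. An automorphism $\phi$ of $L_{\infty,c}$ maps $L_{\infty,c}'$ onto itself, since $\phi([a,b])=[\phi(a),\phi(b)]$ and $\phi$ is surjective. Hence $\phi$ induces a bijective linear map on $A_\infty=L_{\infty,c}/L_{\infty,c}'$, whose inverse is induced by $\phi^{-1}$. For the converse, let $\psi$ be an endomorphism whose induced map $\bar\psi$ on $A_\infty$ lies in $\GL_\infty(K)$. The plan is to factor $\psi$ as $\psi=\beta\circ\alpha$, where $\beta\in\GL_\infty(K)$ is the automorphism extending $\bar\psi$ (available by free generation, as noted in the Preliminaries). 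Then $\alpha=\beta^{-1}\psi$ is an endomorphism with $\alpha(y_i)\equiv y_i \pmod{L_{\infty,c}'}$ for all $i$. It therefore suffices to show that such an IA-endomorphism $\alpha$ is bijective.

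The key tool is the filtration $\gamma_m=\gamma_m(L_{\infty,c})=\bigoplus_{k\ge m}L^k_{\infty,c}$, with $\gamma_{c+1}=0$. First, $\alpha(x)-x\in\gamma_{m+1}$ for every $x\in\gamma_m$. To see this, write $\alpha(y_i)=y_i+u_i$ with $u_i\in\gamma_2$ and expand a commutator $[\alpha(y_{i_1}),\dots,\alpha(y_{i_m})]$ multilinearly. Every term other than $[y_{i_1},\dots,y_{i_m}]$ involves some $u_j$, so it has length at least $m+1$. This extends to all of $\gamma_m$ by linearity, since $\gamma_m$ is spanned by commutators of length $\ge m$ in the $y_i$.

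\textbf{Injectivity.} Let $x\neq0$ and choose $m$ maximal with $x\in\gamma_m$; here $m\le c$. Then $\alpha(x)\equiv x\pmod{\gamma_{m+1}}$ and $x\notin\gamma_{m+1}$, so $\alpha(x)\ne 0$.

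\textbf{Surjectivity.} We show by downward induction on $m$ that $\gamma_m\subseteq \alpha(L_{\infty,c})$. The base case holds because $\gamma_{c+1}=0$. For the inductive step, let $z\in\gamma_m$. Then $\alpha(z)-z\in\gamma_{m+1}$, which by induction equals $\alpha(w)$ for some $w$. Hence $z=\alpha(z-w)$. Taking $m=1$ gives surjectivity.

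Equivalently, one can write $\alpha=\mathrm{id}+\delta$ with $\delta(\gamma_m)\subseteq\gamma_{m+1}$ and invert $\alpha$ as the finite sum $\sum_{k=0}^{c-1}(-\delta)^k$. This is linear, though not a priori a Lie map, but bijectivity is all we need.

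The only step requiring care is the claim $\alpha(\gamma_m)\subseteq\gamma_m$ with $\alpha\equiv\mathrm{id}$ modulo $\gamma_{m+1}$. This is where nilpotency of class $c$ and the multilinear expansion enter. With that in hand, $\psi=\beta\alpha$ is a composite of bijections, and a bijective Lie endomorphism is an automorphism.
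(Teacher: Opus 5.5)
Your proof is correct. The paper gives no proof of this lemma to compare against: it is listed among results that are ``well-known for free nilpotent Lie algebras'' and used without argument.

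Your argument fills this gap soundly, and it is suited to the infinite-rank setting. A common finite-rank argument gets surjectivity from the fact that a subalgebra $H$ of a nilpotent Lie algebra with $H+L'=L$ is all of $L$. It then deduces injectivity from finite-dimensionality, i.e.\ Hopficity, of the finite-rank algebra. That second step is unavailable here, because $L_{\infty,c}$ is not Hopfian: the shift $y_1\mapsto 0$, $y_i\mapsto y_{i-1}$ is surjective but not injective.

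You instead reduce to an IA-endomorphism via $\psi=\beta\alpha$. You then observe that $\alpha$ induces the identity on each $\gamma_m/\gamma_{m+1}$, which is what your ``length at least $m+1$'' amounts to once you use $[\gamma_a,\gamma_b]\subseteq\gamma_{a+b}$, since each $u_j$ is only a combination of commutators of length $\geq 2$. The finite length of the filtration then gives injectivity and surjectivity together. No dimension count, countability, or characteristic $0$ is needed.

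Your argument also proves directly the special case the paper actually invokes later: $y_i\mapsto y_i+f_i$ with $f_i\in\gamma_t(L_{\infty,c})$ is an automorphism. The paper uses this in the identification $\overline{\IA}(t,L_{\infty,c})\cong P(t)$ and for $\alpha_{\Delta}$ in Proposition \ref{prop:controlled-lifting}.
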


\begin{lemma}\label{lemm2}
Every automorphism of $L_{\infty, c-1}$ lifts to an automorphism of $L_{\infty, c}$ for all $c \geq 2$.
\end{lemma}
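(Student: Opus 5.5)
Let $\psi \in \Aut(L_{\infty, c-1})$. Write $z_i$ for the free generators of $L_{\infty,c-1}$, namely the images of the $y_i$ under the natural epimorphism $\pi \colon L_{\infty,c} \to L_{\infty,c-1}$, whose kernel is $L_{\infty,c}^{c} = \gamma_c(L_{\infty,c})$. The plan is to build a lift directly on the free generators and then invoke Lemma \ref{lemm1} to see that the lift is invertible.

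For each $i \in \mathbb{N}$, choose an arbitrary preimage $u_i \in L_{\infty,c}$ of $\psi(z_i)$ under $\pi$. Concretely, $\psi(z_i)$ is a Lie polynomial in finitely many $z_j$ of degree at most $c-1$, and we take $u_i$ to be the same polynomial written in the $y_j$. Since $L_{\infty,c}$ is free nilpotent of class $c$ on $\{y_i\}$, and the assignment $y_i \mapsto u_i$ makes sense for any family of elements of a nilpotent algebra of class $c$, it extends uniquely to an endomorphism $\phi$ of $L_{\infty,c}$. The kernel $L_{\infty,c}^c$ of $\pi$ is fully invariant, so $\phi$ induces an endomorphism $\overline{\phi}$ of $L_{\infty,c-1}$. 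The maps $\overline{\phi}$ and $\psi$ agree on every $z_i$, and both are endomorphisms of a free algebra, so $\overline{\phi} = \psi$. Hence $\phi$ lifts $\psi$.

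It remains to show that $\phi$ is an automorphism. By Lemma \ref{lemm1}, it suffices to check that $\phi$ induces an automorphism of $L_{\infty,c}/L_{\infty,c}' = A_\infty$. For $c \geq 2$, the derived subalgebra $L_{\infty,c}'$ contains $\ker\pi = L_{\infty,c}^c$. Therefore $A_\infty$ is canonically identified with $L_{\infty,c-1}/L_{\infty,c-1}'$, and under this identification the map induced by $\phi$ on $A_\infty$ coincides with the map induced by $\psi$. Since $\psi$ is an automorphism, it maps $L_{\infty,c-1}'$ onto itself, and so it induces an automorphism of the abelianization. Applying Lemma \ref{lemm1} then shows that $\phi \in \Aut(L_{\infty,c})$.

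The only point requiring care is that Lemma \ref{lemm1} really holds in countably infinite rank, where surjectivity is the nontrivial direction, since injectivity and surjectivity need not coincide in infinite dimension. We take this lemma as given. If one wanted to avoid relying on it, one could construct the inverse explicitly: apply the same construction to $\psi^{-1}$ to obtain a lift $\phi'$. Then $\phi\phi'$ is an endomorphism of $L_{\infty,c}$ of the form $y_i \mapsto y_i + w_i$ with $w_i \in L_{\infty,c}^c$, which is central. Such a map is invertible, with inverse $y_i \mapsto y_i - w_i$, because commutators of length $c$ are unchanged by adding central terms. Hence $\phi$ has a right inverse, and by symmetry a left inverse, so $\phi$ is bijective.
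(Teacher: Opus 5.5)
Your proof is correct and is the standard argument. The paper states this lemma as well known without proof, but its proof of Lemma~\ref{lemm4} carries out exactly your construction: lift the images of the generators arbitrarily, extend to an endomorphism, and apply Lemma~\ref{lemm1} through the identification of abelianizations, which uses $\gamma_c(L_{\infty,c})\subseteq L_{\infty,c}'$ for $c\geq2$.

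Your explicit-inverse fallback is also valid and makes the proof independent of Lemma~\ref{lemm1}. The only slip is in your commentary: in infinite rank it is injectivity, not surjectivity, that is the delicate half of Lemma~\ref{lemm1}, since surjectivity follows from $\phi(L)+L'=L$ in any nilpotent Lie algebra.
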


For $2 \leq t \leq c$, the natural projection $\pi_{t} \colon L_{\infty, c} \twoheadrightarrow L_{\infty, c}/\gamma_{t}(L_{\infty, c})$ induces a group homomorphism $\widetilde{\pi}_{t} \colon \Aut(L_{\infty, c}) \longrightarrow \Aut(L_{\infty, c}/\gamma_{t}(L_{\infty, c}))$. By repeated application of Lemma \ref{lemm2}, it follows that each $\widetilde{\pi}_{t}$ is surjective. We write
$\IA(t,L_{\infty,c})=\ker\widetilde{\pi}_t$ for
$2\leq t\leq c$, and set
$\IA(c+1,L_{\infty,c})=\{\mathrm{Id}_{L_{\infty,c}}\}$. Thus, for
$t=2,\dots,c+1$,
\[
\IA(t, L_{\infty, c}) = \{\varphi \in \Aut(L_{\infty, c}) \colon \varphi(y_{i}) = y_{i} + v_{i}, v_{i} \in \gamma_{t}(L_{\infty, c}) \text{ for all } i \in \mathbb{N}\}. 
\]
For $t = 2$, write $\IA(L_{\infty, c}) = \IA(2, L_{\infty, c})$. The elements of $\IA(L_{\infty, c})$ are called IA-automorphisms of $L_{\infty, c}$. Since $\GL_{\infty}(K) \cap \IA(L_{\infty, c}) = \{\mathrm{Id}_{L_{\infty, c}}\}$, $\Aut(L_{\infty, c})$ is the semidirect product of $\IA(L_{\infty, c})$ by $\GL_{\infty}(K)$. Since $L_{\infty, c}$ is nilpotent, $\IA(L_{\infty, c})$ is also nilpotent and the following is a central series of $\IA(L_{\infty, c})$:
\[
\IA(L_{\infty, c}) > \IA(3, L_{\infty, c}) > \cdots > \IA(c, L_{\infty, c}) > \IA(c+1, L_{\infty, c}) = \{\mathrm{Id}_{L_{\infty, c}}\},
\]
where $\mathrm{Id}_{L_{\infty, c}}$ denotes the identity mapping. We denote 
\[
\overline{\IA}(t, L_{\infty, c}) = \IA(t, L_{\infty, c})/\IA(t+1, L_{\infty, c}).
\]
The analogous central series of IA-automorphisms for free nilpotent groups goes back to Andreadakis \cite{and}. For $2\leq t\leq c$, the group $\GL_{\infty}(K)$ acts on $\overline{\IA}(t, L_{\infty, c})$ by conjugation: $g \cdot \bigl(\phi\,\IA(t+1, L_{\infty, c})\bigr) = (g \phi g^{-1})\,\IA(t+1, L_{\infty, c})$, where $g \in \GL_{\infty}(K)$ and $\phi \in \IA(t, L_{\infty, c})$.

For $1 \leq t \leq c$, set 
\[
D_{t} = \gamma_{t}(L_{\infty, c})/\gamma_{t+1}(L_{\infty, c}). 
\]
In particular, $D_{1} = L_{\infty, c}/L_{\infty, c}^{\prime} = A_{\infty}$. For $2\leq t\leq c$, we consider the direct product of countably many copies of $D_{t}$:
\[
P(t) = \prod_{i\in\mathbb{N}} D_{t}.
\]

For $\phi\in\IA(t,L_{\infty,c})$, write
$\phi(y_i)=y_i+f_i$, where
$f_i\in\gamma_t(L_{\infty,c})$, and set
$\overline{f}_i=f_i+\gamma_{t+1}(L_{\infty,c})$. Since
$\IA(L_{\infty,c})$ acts trivially by conjugation on
$\overline{\IA}(t,L_{\infty,c})$, the mapping
\[
\phi\IA(t+1,L_{\infty,c})
\longmapsto
(\overline{f}_1,\overline{f}_2,\dots)
\]
defines an isomorphism of abelian groups
$\overline{\IA}(t,L_{\infty,c})\cong P(t)$. Indeed, for every
$(\overline{f}_i)_{i\in\mathbb N}\in P(t)$, choose representatives
$f_i\in\gamma_t(L_{\infty,c})$ and define
$\phi(y_i)=y_i+f_i$. By Lemma \ref{lemm1}, $\phi$ is an
automorphism. Moreover, composition corresponds to coordinatewise
addition modulo $\gamma_{t+1}(L_{\infty,c})$.

Let $g\in G_{\infty}$. Since $g$ preserves
$\gamma_t(L_{\infty,c})$ and $\gamma_{t+1}(L_{\infty,c})$, it
induces a $K$-linear automorphism of $D_t$. We denote this induced
map also by $g$. Write $g^{-1}=(b_{ij})$ with respect to the basis
$\{\bar y_i:i\in\mathbb N\}$ of $D_1$. For every fixed $j$, only
finitely many $b_{ij}$ are nonzero. Under the above identification $\overline{\IA}(t, L_{\infty, c}) \cong P(t)$, the conjugation action of $G_{\infty}$ is given by
\[
g \ast (\overline{f}_{1}, \overline{f}_{2}, \dots) = (g(\overline{f}_{1}), g(\overline{f}_{2}), \dots) g^{-1},
\]
that is,
\[
(g \ast \overline{f})_{j} = \sum_{i \in \mathbb{N}} g(\overline{f}_{i})\, b_{ij} \qquad (j \in \mathbb{N}).
\]
This is well-defined since for each $j$ the sum is finite. Hence, $P(t)$ is a $KG_{\infty}$-module. It is straightforward to verify that $\overline{\IA}(t, L_{\infty, c})$ is isomorphic to $P(t)$ as $KG_{\infty}$-modules. Thus, for $t = 2, \dots, c$, we identify $\overline{\IA}(t, L_{\infty, c})$ with $P(t)$ as $KG_{\infty}$-modules. This is the extension to countably infinite rank of the action described in \cite{drgu} for free nilpotent Lie algebras of finite rank.

\subsection{Finitary Automorphisms}

We adopt the corresponding terminology from \cite{brma}. Let $M$ denote
either $L_{\infty}$ or $L_{\infty,c}$, and let
$X=\{x_i:i\in\mathbb N\}$ be a free generating set of $M$. An
automorphism $\xi$ of $M$ is called \emph{finitary with respect to $X$}
if $\xi(x_i)=x_i$
for all but finitely many $i$. When $X$ is the standard free generating
set, we simply say that $\xi$ is finitary. Thus, the standard free
generating set is $\{\ell_i:i\in\mathbb N\}$ when $M=L_{\infty}$ and
$\{y_i:i\in\mathbb N\}$ when $M=L_{\infty,c}$.

For $2\leq t\leq c+1$, we define $\IA_{\fin}(t,L_{\infty,c})$ to be the set of
all finitary elements of $\IA(t,L_{\infty,c})$, with respect to the
standard free generating set $\{y_i:i\in\mathbb N\}$, and write
\[
\IA_{\fin}(L_{\infty,c})
=\IA_{\fin}(2,L_{\infty,c}).
\]
For $2\leq t\leq c$, set
\[
\overline{\IA}_{\fin}(t,L_{\infty,c})
=
\IA_{\fin}(t,L_{\infty,c})
/
\IA_{\fin}(t+1,L_{\infty,c}).
\]
Note that for $t = 2, \dots, c$,
\[
\IA_{\fin}(t, L_{\infty, c}) \cap \IA(t+1, L_{\infty, c}) = \IA_{\fin}(t+1, L_{\infty, c}).
\]
Hence, $\overline{\IA}_{\fin}(t, L_{\infty, c})$ is naturally isomorphic to a subgroup of $\overline{\IA}(t, L_{\infty, c})$.

Let $Q(t)$ denote the restricted direct sum of countably many copies of $D_{t}$. 
Thus, $Q(t)$ consists precisely of all tuples $u = (u_{1}, u_{2}, \dots)$ with 
$u_{i} \in D_{t}$ and $u_{i} = 0$ for all but finitely many $i$. 
From now on, the support of a tuple $(u_{1}, u_{2}, \dots)$ is defined to be the set 
of indices $i \in \mathbb{N}$ such that $u_{i} \neq 0$. The support of an 
automorphism is defined analogously as the set of generators on which it acts 
nontrivially.

Restricting the above isomorphism $\overline{\IA}(t, L_{\infty, c}) \cong P(t)$ to 
$\overline{\IA}_{\fin}(t, L_{\infty, c})$, we obtain 
$\overline{\IA}_{\fin}(t, L_{\infty, c}) \cong Q(t)$ as abelian groups. 
Moreover, every $g \in G_{\infty}$ fixes all but finitely many basis elements 
$\bar{y}_{i}$ of $D_{1}$. Hence, the matrix of $g^{-1}$ differs from the identity in 
only finitely many columns. It follows that for each $u \in Q(t)$, the tuple $g \ast u$ 
also has finite support. Therefore, $Q(t)$ is a $KG_{\infty}$-submodule of $P(t)$. 
Consequently, for each $t = 2, \dots, c$, we identify 
$\overline{\IA}_{\fin}(t, L_{\infty, c})$ with $Q(t)$ as $KG_{\infty}$-modules. In 
particular, since 
$\IA_{\fin}(c, L_{\infty, c}) = \overline{\IA}_{\fin}(c, L_{\infty, c})$, we identify 
$\IA_{\fin}(c, L_{\infty, c})$ with $Q(c)$ as $KG_{\infty}$-modules.

\begin{remark}\label{rema1}
\upshape
Under the identification $\overline{\IA}_{\fin}(t, L_{\infty, c}) \cong Q(t)$, the action 
of $G_{\infty}$ on $Q(t)$ agrees with conjugation in 
$\overline{\IA}_{\fin}(t, L_{\infty, c})$: if $u \in Q(t)$ 
corresponds to $\overline{\theta} \in \overline{\IA}_{\fin}(t, L_{\infty, c})$ and 
$g \in G_{\infty}$, then $g \ast u$ corresponds to $g \overline{\theta} g^{-1}$. In
particular, since $\overline{\IA}_{\fin}(t, L_{\infty, c})$ is an abelian group, the 
element $g \ast u - u$ corresponds to 
$(g \, \overline{\theta} \, g^{-1}) \, \overline{\theta}^{-1}$.
\end{remark}

\section{Lifting Automorphisms}\label{sec5}

For $c\geq2$, we call an automorphism $\theta\in\IA(c,L_{\infty,c})$
\emph{standard} if there exist pairwise distinct indices
$p,q_2,\dots,q_c$ and $a\in K$ such that
\[
\theta(y_p)=y_p+a[y_p,y_{q_2},\dots,y_{q_c}],
\qquad
\theta(y_i)=y_i\quad(i\neq p).
\]

\begin{proposition}\label{propo2}
Let $c \geq 2$. Then the standard automorphism $\sigma$ of $L_{\infty,c}$,
defined by
\[
\sigma(y_{1})=y_{1}+[y_{1},y_2,\dots,y_{c}], \qquad
\sigma(y_{i})=y_{i} \text{ for } i\neq1,
\]
lifts to an automorphism of $L_{\infty}$.
\end{proposition}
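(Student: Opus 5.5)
The plan is to write down an explicit automorphism of $L_{\infty}$, built from finitely many elementary (triangular) and linear automorphisms, whose image under $\widetilde{\rho}_{\gamma_{c+1}(L_\infty)}$ is $\sigma$. A lift only has to agree with $\sigma$ modulo $\gamma_{c+1}(L_{\infty})$, so any terms of degree $>c$ that the construction produces can be ignored. The basic tame pieces are the following.

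\begin{itemize}
\item An \emph{elementary} automorphism $\ell_p\mapsto \ell_p+w$ with $w$ not involving $\ell_p$, all other generators fixed; it is invertible in $L_\infty$ with inverse $\ell_p\mapsto\ell_p-w$.
\item A \emph{linear} automorphism, i.e.\ an element of $\GL_\infty(K)$, which lifts trivially.
\end{itemize}

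The map $\ell_1\mapsto \ell_1+[\ell_1,\ell_2,\dots,\ell_c]$ itself is presumably not invertible in $L_\infty$, so the factor $y_1$ must be produced by conjugation from an elementary map that uses a fresh variable.

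Throughout, write $w$ for the tail $\ell_2,\dots,\ell_c$, so that $[u,w]$ means $[u,\ell_2,\dots,\ell_c]$. Fix an index $N>c$. Let
\[
\alpha\colon \ell_1\mapsto \ell_1+[\ell_N,w], \qquad \beta\colon \ell_N\mapsto \ell_N+\ell_1,
\]
with all other generators fixed by each map; $\alpha$ is elementary and $\beta$ is linear. Working in $\IA_{\fin}(c,L_{\infty,c})\cong Q(c)$ and using Remark~\ref{rema1}, the commutator $\beta\alpha\beta^{-1}\alpha^{-1}$ corresponds to $\beta\ast u-u$, where $u$ is the tuple of $\alpha$. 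A direct computation modulo $\gamma_{c+1}$ gives
\[
y_1\mapsto y_1+[y_1,w],\qquad y_N\mapsto y_N-[y_N,w]-[y_1,w],
\]
with all other $y_i$ fixed. Composing with the elementary automorphism $\ell_N\mapsto \ell_N+[\ell_1,w]$ cancels the term $[y_1,w]$. This yields a tame element equal to $\theta_1\theta_N^{-1}$, where $\theta_p$ is the standard automorphism $y_p\mapsto y_p+[y_p,w]$.

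The tame finitary elements of $\IA_{\fin}(c,L_{\infty,c})$ form a $KG_\infty$-submodule $T$ of $Q(c)$, since tame automorphisms form a group closed under conjugation by $G_\infty$. The key step, and the one I expect to be the main obstacle, is to get $\theta_1$ itself into $T$ rather than only differences $\theta_p\theta_q^{-1}$. These differences are all related by permutations and scalings, so symmetric manipulations alone will not separate $\theta_1$.

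To break the symmetry I would first try the following. Replace $\alpha$ by an elementary map whose $y_1$-component has the fresh variable sitting in a non-leading slot, for instance $\ell_1\mapsto\ell_1+[\ell_2,\ell_N,\ell_3,\dots,\ell_c]$. Conjugate it by $\ell_N\mapsto\ell_N+\ell_1$, and combine the result with anticommutativity and the Jacobi identity in $D_c$. This should produce $[y_1,w]$ on $y_1$ with a coefficient different from the coefficient of the residual $[y_N,\dots]$ on $y_N$. A suitable linear combination, allowed because $T$ is a $K$-subspace (scalings $y_2\mapsto\lambda y_2$ lie in $G_\infty$), then isolates $\theta_1$ modulo differences already known to be tame. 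Any leftover residual on $y_N$ has the form $[y_N,y_{q_2},\dots]$ or $[y_1,\dots]$. The second kind is removed by an elementary map in $\ell_N$. The first kind I would try to move, via the established tameness of $\theta_N\theta_M^{-1}$, to another fresh index, and then cancel it against a second copy of the same construction with opposite sign.

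Finally, I would assemble the explicit finite product of elementary and linear automorphisms of $L_\infty$ obtained this way and check that it induces exactly $\sigma$ on every $y_i$. This check is routine: all corrections lie in $\gamma_c$ and are central modulo $\gamma_{c+1}$, so composition is additive on the degree-$c$ parts.
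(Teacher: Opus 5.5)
\textbf{There is a genuine gap, and it is exactly at the step you flagged.} Your first computation is correct. The map $(\ell_N\mapsto\ell_N+[\ell_1,w])\circ\beta\alpha\beta^{-1}\alpha^{-1}$ is an automorphism of $L_{\infty}$ that moves only $\ell_1,\ell_N$ and induces $\theta_1\theta_N^{-1}$. However, no finitary automorphism of $L_\infty$ induces $\sigma$, so no finite product of the pieces you use can work. Those pieces are finitary elementary maps, $\beta$, scalings, and $G_\infty$-manipulations inside $Q(c)$.

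To see this, take a finitary automorphism $\phi$ inducing $y_i\mapsto y_i+u_i$, with each $u_i$ homogeneous of degree $c\ge2$. For $n$ large, $\phi$ restricts to an automorphism of $L_n=L(\ell_1,\dots,\ell_n)$. By the chain rule, its Fox Jacobian $(\partial_j\phi(\ell_k))$ is invertible over $U(L_n)=K\langle x_1,\dots,x_n\rangle$. After abelianizing, its determinant therefore lies in $K^{*}$. The degree-$(c-1)$ component of this determinant is $\mathrm{Tr}(u)=\sum_i\overline{\partial_i u_i}$, so $\mathrm{Tr}(u)$ must vanish.

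The map $\mathrm{Tr}$ is additive on $Q(c)$ and $G_\infty$-equivariant, so its kernel is a $KG_\infty$-submodule. It vanishes on every $e_p(v)$ in which $y_p$ does not occur, and on every $\theta_p\theta_q^{-1}$. But $\mathrm{Tr}(\theta_1)=\overline{\partial_1[x_1,x_2,\dots,x_c]}=x_2x_3\cdots x_c\neq0$. So no linear combination or ``symmetry breaking'' inside the finitary world can isolate $\theta_1$.

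Your proposed variant shows this concretely. Take the commutator of $\beta$ with $\ell_1\mapsto\ell_1+[\ell_2,\ell_N,\ell_3,\dots,\ell_c]$. It gives $-[y_1,w]$ on $y_1$ and $[y_N,w]+[y_1,w]$ on $y_N$. That is $\theta_1^{-1}\theta_N$ up to an elementary correction, and the coefficients are again forced to be opposite.

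\textbf{The missing idea} is to use infinitely many generators at once, via a telescoping, Eilenberg-swindle type product; this is what the paper does. Choose distinct indices $p_0=1,p_1,p_2,\dots$ outside $\{2,\dots,c\}$. Let $\omega_k$ be your lift of $\theta_{p_k}\theta_{p_{k+1}}^{-1}$. Both $\omega_k$ and $\omega_k^{-1}$ fix $\ell_2,\dots,\ell_c$, move only $\ell_{p_k},\ell_{p_{k+1}}$, and preserve $L(\ell_{p_k},\ell_{p_{k+1}},\ell_2,\dots,\ell_c)$.

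The pairs $\{p_k,p_{k+1}\}$ with $k$ even are pairwise disjoint. Set $\omega_{\mathrm{e}}(\ell_i)=\omega_k(\ell_i)$ for $i\in\{p_k,p_{k+1}\}$ with $k$ even, and $\omega_{\mathrm{e}}(\ell_i)=\ell_i$ otherwise. This is an automorphism of $L_\infty$, since its inverse is built the same way from the $\omega_k^{-1}$. Define $\omega_{\mathrm{o}}$ likewise from the odd $k$.

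Modulo $\gamma_{c+1}(L_\infty)$ the degree-$c$ corrections simply add. Hence $\omega_{\mathrm{e}}\omega_{\mathrm{o}}$ induces $+[y_1,w]$ on $y_1$, and $-[y_{p_k},w]+[y_{p_k},w]=0$ on each $y_{p_k}$ with $k\ge1$. This is exactly $\sigma$. The lift is non-finitary, which is precisely how it escapes the determinant obstruction. Your building block produces the same tail $w$ on both moved generators, so you would not even need the cyclic relabelling $\tau$ that the paper uses for $c\ge3$.
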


\begin{proof}
First assume that $c=2$. For each positive integer $n$, define the
automorphisms $\phi_n$, $\zeta_n$ and $\psi_n$ of $L_{\infty}$ by
\[
\begin{split}
\phi_n\colon \ell_{2n-1}
&\mapsto \ell_{2n-1}+\ell_{2n+1},
\qquad \ell_i\mapsto\ell_i \text{ for } i\neq2n-1,\\
\zeta_n\colon \ell_{2n+1}
&\mapsto \ell_{2n+1}-[\ell_2,\ell_{2n-1}],
\qquad \ell_i\mapsto\ell_i \text{ for } i\neq2n+1,\\
\psi_n\colon \ell_{2n-1}
&\mapsto \ell_{2n-1}-[\ell_2,\ell_{2n+1}],
\qquad \ell_i\mapsto\ell_i \text{ for } i\neq2n-1.
\end{split}
\]
Write
\[
\omega_n=\psi_n\zeta_n^{-1}\phi_n^{-1}\zeta_n\phi_n.
\]
A direct calculation modulo $\gamma_3(L_{\infty})$ shows that
$\omega_n$ induces the automorphism $\sigma_n$ of $L_{\infty,2}$
given by
\[
\begin{split}
\sigma_n(y_{2n-1})
&=y_{2n-1}-[y_2,y_{2n-1}],\\
\sigma_n(y_{2n+1})
&=y_{2n+1}+[y_2,y_{2n+1}],
\end{split}
\]
and
\[
\sigma_n(y_i)=y_i
\qquad(i\neq2n-1,2n+1).
\]

Define the maps $\omega_{\mathrm{o}}$ and
$\omega_{\mathrm{o}}^{-}$ of $L_{\infty}$ on the free generators by
\[
\omega_{\mathrm{o}}(\ell_i)=
\begin{cases}
\omega_n(\ell_i),
& i\in \{2n-1,2n+1\}\text{ for some odd }n,\\
\ell_i,
& \text{otherwise},
\end{cases}
\]
and
\[
\omega_{\mathrm{o}}^{-}(\ell_i)=
\begin{cases}
\omega_n^{-1}(\ell_i),
& i\in \{2n-1,2n+1\}\text{ for some odd }n,\\
\ell_i,
& \text{otherwise}.
\end{cases}
\]
Since the pairs $\{2n-1,2n+1\}$ corresponding to odd values of $n$
are pairwise disjoint, the above maps are well-defined and determine
endomorphisms of $L_{\infty}$. Every $\omega_n$ and $\omega_n^{-1}$
fixes $\ell_2$ and preserves the Lie subalgebra generated by
$\ell_2,\ell_{2n-1},\ell_{2n+1}$. It follows that, on every free
generator,
\[
\omega_{\mathrm{o}}^{-}\omega_{\mathrm{o}}
=\omega_{\mathrm{o}}\omega_{\mathrm{o}}^{-}
=\mathrm{Id}_{L_{\infty}}.
\]
Thus $\omega_{\mathrm{o}}$ is an automorphism.

The pairs $\{2n-1,2n+1\}$ corresponding to even values of $n$ are
also pairwise disjoint. Therefore, by the same argument as above, the
endomorphism $\omega_{\mathrm{e}}$ of $L_{\infty}$ defined by
\[
\omega_{\mathrm{e}}(\ell_i)=
\begin{cases}
\omega_n(\ell_i),
& i\in \{2n-1,2n+1\}\text{ for some even }n,\\
\ell_i,
& \text{otherwise},
\end{cases}
\]
is an automorphism.

Write $\omega=\omega_{\mathrm{o}}\omega_{\mathrm{e}}$. The
automorphisms induced by $\omega_{\mathrm{o}}$ and
$\omega_{\mathrm{e}}$ belong to $\IA(2,L_{\infty,2})$ and therefore
fix $\gamma_2(L_{\infty,2})$ pointwise. For $m\geq0$, the automorphism
induced by $\omega_{\mathrm{o}}$ satisfies
\[
\begin{split}
y_{4m+1}
&\mapsto y_{4m+1}-[y_2,y_{4m+1}],\\
y_{4m+3}
&\mapsto y_{4m+3}+[y_2,y_{4m+3}],
\end{split}
\]
and fixes every even generator. For $m\geq1$, the automorphism induced
by $\omega_{\mathrm{e}}$ satisfies
\[
\begin{split}
y_{4m-1}
&\mapsto y_{4m-1}-[y_2,y_{4m-1}],\\
y_{4m+1}
&\mapsto y_{4m+1}+[y_2,y_{4m+1}],
\end{split}
\]
and fixes every even generator and $y_1$.

It follows that
\[
\omega(\ell_1)+\gamma_3(L_{\infty})
=y_1-[y_2,y_1]
=y_1+[y_1,y_2].
\]
For $m\geq1$,
\[
\omega(\ell_{4m+1})+\gamma_3(L_{\infty})
=y_{4m+1}-[y_2,y_{4m+1}]
+[y_2,y_{4m+1}]
=y_{4m+1},
\]
whereas, for $m\geq0$,
\[
\omega(\ell_{4m+3})+\gamma_3(L_{\infty})
=y_{4m+3}+[y_2,y_{4m+3}]
-[y_2,y_{4m+3}]
=y_{4m+3}.
\]
Every even generator is fixed. Therefore $\omega$ induces $\sigma$ on
$L_{\infty,2}$.

Now assume that $c\geq3$. Define the automorphisms $\beta_1$ and
$\xi_1$ of $L_{\infty}$ by
\[
\begin{split}
\beta_1\colon \ell_1
&\mapsto \ell_1+[\ell_{c+1},\ell_3,\dots,\ell_c],
\qquad \ell_i\mapsto\ell_i \text{ for } i\neq1,\\
\xi_1\colon \ell_{c+1}
&\mapsto \ell_{c+1}+[\ell_1,\ell_2],
\qquad \ell_i\mapsto\ell_i \text{ for } i\neq c+1.
\end{split}
\]
Let $\tau=(2\ 3\ \dots\ c)\in S_c$. For $n\geq2$, define the
automorphisms $\beta_n$ and $\xi_n$ of $L_{\infty}$ by
\[
\begin{split}
\beta_n\colon \ell_{n+c-1}
&\mapsto \ell_{n+c-1}
+[\ell_{n+c},\ell_{\tau^{n-1}(3)},\dots,
\ell_{\tau^{n-1}(c)}],\\
\xi_n\colon \ell_{n+c}
&\mapsto \ell_{n+c}
+[\ell_{n+c-1},\ell_{\tau^{n-1}(2)}],
\end{split}
\]
and let them fix all the remaining free generators.

Write
\[
\eta_n=\xi_n^{-1}\beta_n^{-1}\xi_n\beta_n
\qquad(n\geq1).
\]
A direct calculation modulo $\gamma_{c+1}(L_{\infty})$ shows that
$\eta_1$ induces the automorphism $\sigma_1$ given by
\[
\begin{split}
\sigma_1(y_1)
&=y_1+[y_1,y_2,\dots,y_c],\\
\sigma_1(y_{c+1})
&=y_{c+1}-[y_{c+1},y_3,\dots,y_c,y_2],
\end{split}
\]
and fixes all the other generators. For $n\geq2$, the automorphism
$\eta_n$ induces the automorphism $\sigma_n$ given by
\[
\begin{split}
\sigma_n(y_{n+c-1})
&=y_{n+c-1}
+[y_{n+c-1},y_{\tau^{n-1}(2)},\dots,
y_{\tau^{n-1}(c)}],\\
\sigma_n(y_{n+c})
&=y_{n+c}
-[y_{n+c},y_{\tau^{n-1}(3)},\dots,
y_{\tau^{n-1}(c)},y_{\tau^{n-1}(2)}],
\end{split}
\]
and fixes all the other generators.

Write
\[
\Psi_1=\{1,c+1\},
\qquad
\Psi_n=\{n+c-1,n+c\}
\quad(n\geq2).
\]
Every $\eta_n$ fixes the free generators whose indices do not belong
to $\Psi_n$. Moreover, every $\eta_n$ fixes
$\ell_2,\dots,\ell_c$.

The pairs $\Psi_n$ corresponding to odd values of $n$ are pairwise
disjoint. Define the maps $\eta_{\mathrm{o}}$ and
$\eta_{\mathrm{o}}^{-}$ of $L_{\infty}$ on the free generators by
\[
\eta_{\mathrm{o}}(\ell_i)=
\begin{cases}
\eta_n(\ell_i),
& i\in\Psi_n\text{ for some odd }n,\\
\ell_i,
& \text{otherwise},
\end{cases}
\]
and
\[
\eta_{\mathrm{o}}^{-}(\ell_i)=
\begin{cases}
\eta_n^{-1}(\ell_i),
& i\in\Psi_n\text{ for some odd }n,\\
\ell_i,
& \text{otherwise}.
\end{cases}
\]
The above maps are well-defined and determine endomorphisms of
$L_{\infty}$. Every $\eta_n$ and $\eta_n^{-1}$ preserves the Lie
subalgebra generated by the generators indexed by $\Psi_n$ together
with $\ell_2,\dots,\ell_c$. It follows that, on every free generator,
\[
\eta_{\mathrm{o}}^{-}\eta_{\mathrm{o}}
=\eta_{\mathrm{o}}\eta_{\mathrm{o}}^{-}
=\mathrm{Id}_{L_{\infty}}.
\]
Thus $\eta_{\mathrm{o}}$ is an automorphism.

The pairs $\Psi_n$ corresponding to even values of $n$ are also
pairwise disjoint. Therefore, by the same argument as above, the
endomorphism $\eta_{\mathrm{e}}$ of $L_{\infty}$ defined by
\[
\eta_{\mathrm{e}}(\ell_i)=
\begin{cases}
\eta_n(\ell_i),
& i\in\Psi_n\text{ for some even }n,\\
\ell_i,
& \text{otherwise},
\end{cases}
\]
is an automorphism.

Write $\eta=\eta_{\mathrm{o}}\eta_{\mathrm{e}}$. The automorphisms
induced by $\eta_{\mathrm{o}}$ and $\eta_{\mathrm{e}}$ belong to
$\IA(c,L_{\infty,c})$ and therefore fix
$\gamma_c(L_{\infty,c})$ pointwise. The generator $y_1$ is changed
only by $\sigma_1$. Hence
\[
\eta(\ell_1)+\gamma_{c+1}(L_{\infty})
=y_1+[y_1,y_2,\dots,y_c].
\]
The generators $y_2,\dots,y_c$ are fixed.

Let $n\geq1$. The generator $y_{n+c}$ is changed only by
$\sigma_n$ and $\sigma_{n+1}$. In particular,
\[
\begin{split}
\sigma_n(y_{n+c})&=y_{n+c}
-[y_{n+c},y_{\tau^{n-1}(3)},\dots,
y_{\tau^{n-1}(c)},y_{\tau^{n-1}(2)}],\\
\sigma_{n+1}(y_{n+c})&=y_{n+c}
+[y_{n+c},y_{\tau^n(2)},\dots,y_{\tau^n(c)}].
\end{split}
\]
Since
\[
(\tau^n(2),\dots,\tau^n(c))
=
(\tau^{n-1}(3),\dots,\tau^{n-1}(c),
\tau^{n-1}(2)),
\]
the two Lie commutators above are equal. One of $\sigma_n$ and
$\sigma_{n+1}$ occurs in the automorphism induced by
$\eta_{\mathrm{o}}$, and the other occurs in the automorphism induced
by $\eta_{\mathrm{e}}$. Since both induced automorphisms fix
$\gamma_c(L_{\infty,c})$ pointwise, we obtain
\[
\begin{split}
\eta(\ell_{n+c})+\gamma_{c+1}(L_{\infty})
&=y_{n+c}
-[y_{n+c},y_{\tau^{n-1}(3)},\dots,
y_{\tau^{n-1}(c)},y_{\tau^{n-1}(2)}]\\
&\quad
+[y_{n+c},y_{\tau^n(2)},\dots,y_{\tau^n(c)}]\\
&=y_{n+c}.
\end{split}
\]
Therefore $\eta$ induces $\sigma$ on $L_{\infty,c}$.
\end{proof}

The next result is a relative version of the module calculation in
\cite[proof of Proposition~4.1]{drgu}.
We give the details because the
requirement that the generators indexed by $B$ remain fixed is
essential.

\begin{lemma}\label{lem:controlled-top}
Let $c\geq2$. Let $B,\Delta,\Omega\subseteq\mathbb N$ be finite sets
such that
$B\cap\Delta=\varnothing$ and $\Delta\subseteq\Omega$, and let
$R\subseteq\mathbb N\setminus(B\cup\Omega)$ be infinite. Denote by
$L_{\Omega,c}$ the Lie subalgebra of $L_{\infty,c}$ generated by
$\{y_i:i\in\Omega\}$. Let
$\alpha\in\IA_{\fin}(c,L_{\infty,c})$ satisfy the conditions:
\begin{enumerate}
\item[(i)] Its support is contained in $\Delta$;

\item[(ii)] $\alpha(y_i)-y_i\in\gamma_c(L_{\Omega,c})$
for all $i\in\Delta$.
\end{enumerate}

Then there exist a finite set $R_0\subseteq R$, finitary linear
automorphisms $g_1,\dots,g_M$ of $L_{\infty,c}$, and standard 
automorphisms $\theta_1,\dots,\theta_M$ such that
\[
\alpha=\prod_{\nu=1}^{M}g_\nu\theta_\nu g_\nu^{-1}.
\]
They may be chosen so that:
\begin{enumerate}
\item every $g_\nu$ fixes $y_j$ for $j\in B$, preserves
$\langle y_j:j\in\Omega\cup R_0\rangle$, and fixes every $y_j$ with
$j\notin\Omega\cup R_0$;

\item every $\theta_\nu$ has the form
\[
y_{s_\nu}\mapsto
y_{s_\nu}+a_\nu
[y_{s_\nu},y_{q_{\nu,2}},\dots,y_{q_{\nu,c}}],
\]
where $a_\nu\in K$, $s_\nu\in\Delta\cup R_0$,
$q_{\nu,2},\dots,q_{\nu,c}\in R_0$, and these indices are pairwise
distinct.
\end{enumerate}
In particular, every $\theta_\nu$ and every
$g_\nu\theta_\nu g_\nu^{-1}$ fixes the generators indexed by $B$.
\end{lemma}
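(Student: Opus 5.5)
Since $\IA_{\fin}(c,L_{\infty,c})$ is abelian and identified with the $KG_\infty$-module $Q(c)$, the plan is to decompose $\alpha$ additively into simple pieces and realise each piece as a conjugate $g\theta g^{-1}$, or a product of such. By (i) and (ii), $\alpha$ corresponds to the tuple $u=(u_i)$ with $u_i=0$ for $i\notin\Delta$. For $i\in\Delta$, $u_i$ is the image of an element of $\gamma_c(L_{\Omega,c})$, i.e.\ a linear combination of left-normed commutators $[y_{j_1},\dots,y_{j_c}]$ with all $j_k\in\Omega$. By additivity it suffices to treat a single elementary map
\[
\varepsilon\colon y_s\mapsto y_s+a[y_{j_1},\dots,y_{j_c}],\qquad y_i\mapsto y_i\ (i\neq s),
\]
with $s\in\Delta$ and $j_1,\dots,j_c\in\Omega$. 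Repetitions among the $j_k$ are allowed, and $j_k=s$ is allowed.

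Because $R$ is infinite and disjoint from $B\cup\Omega$, we can pick fresh indices $r_1,\dots,r_c\in R$, and put them into $R_0$. We then express $\varepsilon$ through the standard map
\[
\theta\colon y_s\mapsto y_s+a[y_s,y_{r_2},\dots,y_{r_c}].
\]
The tool is to conjugate $\theta$ by finitary linear maps $g$ of the form $y_{r_k}\mapsto y_{r_k}+\lambda y_{j}$ or $y_s\mapsto y_s+\lambda y_{r}$, together with transvections and permutations inside $\Omega\cup R_0$. These $g$ fix every $y_b$ with $b\in B$, since $B\cap(\Omega\cup R_0)$ may meet $\Omega$ only outside $\Delta$, and we never move those generators except by adding multiples of them to other generators.

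By Remark~\ref{rema1}, $g\ast u-u$ gives the commutator $g\theta g^{-1}\theta^{-1}$, and $g\theta g^{-1}$ is computed by applying $g$ to both the commutator entries and the coordinate. Replacing each $y_{r_k}$ by $y_{r_k}+\lambda_k y_{j_k}$ and extracting the multilinear part in the $\lambda_k$ is the standard polarization argument in characteristic $0$: use several values of $\lambda_k$ and Vandermonde inversion. This turns $[y_s,y_{r_2},\dots,y_{r_c}]$ into $[y_s,y_{j_2},\dots,y_{j_c}]$ as an element of the $K$-span of conjugates of $\theta$, modulo the other terms, which are themselves conjugates of standard maps. Similarly, a transvection $y_{r_1}\mapsto y_{r_1}+y_s$ in the coordinate direction moves the coordinate. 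With these two moves, the $(s,[y_s,\ldots])$-type elements generate everything.

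Each step introduces only finitely many new $R$-indices, so $R_0$ remains finite. Arbitrary scalars are absorbed into the $a_\nu$, and inverses are handled by $a\mapsto -a$.

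The main obstacle is the case where the target commutator $[y_{j_1},\dots,y_{j_c}]$ does not begin with $y_s$, or where indices repeat. In that case we must realise the module element $e_s\otimes[y_{j_1},\dots,y_{j_c}]$ from elements of the form $e_{s'}\otimes[y_{s'},\ast]$ without ever moving $B$-generators. I would first reduce to distinct fresh indices, with a full multilinear commutator $[y_{r_1},\dots,y_{r_c}]$ at coordinate $s$, and substitute $y_{r_k}\mapsto y_{j_k}$ only at the end, by conjugating with linear maps supported on $R_0\cup\Omega$. For the multilinear element I would use the Drensky--Gupta computation: conjugate $\theta'\colon y_{r_1}\mapsto y_{r_1}+[y_{r_1},y_{r_2},\dots,y_{r_c}]$ by $g\colon y_{r_1}\mapsto y_{r_1}+y_s$. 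The resulting $g\ast u-u$ has coordinate $s$ term $-[y_{r_1}+y_s,\dots]$, plus a coordinate-$r_1$ correction, which is standard. Subtracting the terms containing $y_s$, which are standard type $[y_s,\dots]$ after antisymmetry and Jacobi rewriting, isolates $e_s\otimes[y_{r_1},\dots,y_{r_c}]$. Verifying the Jacobi and antisymmetry bookkeeping so that every leftover term is genuinely of standard form, with $s_\nu\in\Delta\cup R_0$ and pairwise distinct indices, is the delicate part.
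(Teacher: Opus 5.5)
Your framework matches the paper's: the $KG_\infty(B)$-submodule of $Q(c)$ generated by one standard tuple with fresh indices from $R$, moved by transvections and permutations that fix $B$, then converted to $\prod g_\nu\theta_\nu g_\nu^{-1}$ via Remark~\ref{rema1}. The gap is the core claim of the lemma: that every $e_s([y_{j_1},\dots,y_{j_c}])$ with $s\in\Delta$, $j_k\in\Omega$ lies in the span of the $g\ast u_\nu$ with $u_\nu$ as in (2). You defer this as ``delicate bookkeeping'', and the one explicit computation you give for it is wrong. Since $(g\ast f)_j=\sum_i g(f_i)b_{ij}$ with $g^{-1}=(b_{ij})$, coordinates transform contragrediently. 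Take $g\colon y_{r_1}\mapsto y_{r_1}+y_s$ and $u'=e_{r_1}([y_{r_1},\dots,y_{r_c}])$. Then $g\ast u'-u'=e_{r_1}([y_s,y_{r_2},\dots,y_{r_c}])$. Indeed $g\theta'g^{-1}$ fixes $y_s$, so there is no coordinate-$s$ term to isolate. The coordinate is moved by the opposite transvection $y_s\mapsto y_s+y_{r_1}$, which is admissible since $s\notin B$; it gives $u'-g\ast u'=e_s([y_{r_1},\dots,y_{r_c}])$ exactly. From there, substituting $y_{r_k}\mapsto y_{r_k}+y_{j_k}$ one index at a time is clean when $j_k\neq s$. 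Each fresh index occurs once, so $h\ast x-x$ already isolates the new term and no polarization is needed.

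The case you never actually treat is when $y_s$ occurs in the target commutator, especially at least twice. Any transvection $h\colon y_r\mapsto y_r+y_s$ that inserts $y_s$ also spills the coordinate. If $y_r$ occurs once in $v$ and $v'$ is $v$ with $y_r$ replaced by $y_s$, then $h\ast e_s(v)-e_s(v)=e_s(v')-e_r(v)-e_r(v')$. These spill terms are not ``themselves conjugates of standard maps'' of the shape required in (2). After Jacobi rewriting, $e_r(v)$ has $y_s$ with $s\in\Delta$ (possibly repeated) and generators from $\Omega$ among its commutator entries, whereas (2) requires pairwise distinct $q_{\nu,k}\in R_0$. You need separate arguments placing $e_r(v)$ (coordinate generator occurring once) and $e_r(v')$ (coordinate generator absent) in the span, plus an induction on the number of occurrences of $y_s$. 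This is exactly what the paper's Steps~2--4 provide: a permutation moving the basic tuple to coordinate $r$, Step~3 for commutators not containing the coordinate generator, and induction on the multiplicity of $y_p$. Step~5 then replaces auxiliary generators by arbitrary, possibly repeated, generators from $\Omega$. As written, your proposal identifies the right strategy but does not prove the lemma.
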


\begin{proof}
Using the identification
$\IA_{\fin}(c,L_{\infty,c})\cong Q(c)$ established above, write
$e_i(u)$ for the tuple whose $i$-th coordinate is
$u\in\gamma_c(L_{\infty,c})$ and whose remaining coordinates are
zero. By (i), (ii), and linearity, it is enough to consider
$e_p([y_{i_1},\dots,y_{i_c}])$, where
$p\in\Delta$ and $i_1,\dots,i_c\in\Omega$. Fix such a tuple.

Let
$G_{\infty}(B)=\{g\in G_{\infty}:g(y_j)=y_j
\text{ for every }j\in B\}$.
In what follows, we use only finitary elements of
$G_{\infty}(B)$. Choose pairwise distinct indices
$r_2,\dots,r_c\in R$, and let $N$ be the
$KG_{\infty}(B)$-submodule of $Q(c)$ generated by
$e_p([y_p,y_{r_2},\dots,y_{r_c}])$. This tuple corresponds to a
standard automorphism. All auxiliary indices introduced
below are chosen in $R$, pairwise distinct and different from the
indices already used.

\smallskip
\noindent\emph{{\bf Step 1}.}
By repeated use of the Jacobi identity and anticommutativity, every Lie
commutator under consideration in which $y_p$ occurs exactly once is
a linear combination of left-normed Lie commutators
$[y_p,y_{j_2},\dots,y_{j_c}]$, where
$j_2,\dots,j_c\in\Omega\setminus\{p\}$. It is therefore enough to
consider such a Lie commutator. Write
$w_1=[y_p,y_{r_2},\dots,y_{r_c}]$ and, for $2\leq k\leq c$, write
\[
w_k=[y_p,y_{j_2},\dots,y_{j_k},
y_{r_{k+1}},\dots,y_{r_c}].
\]
Let $2\leq k\leq c$ and suppose that
$e_p(w_{k-1})\in N$. Let $h_k$ be defined by
$h_k(y_{r_k})=y_{r_k}+y_{j_k}$ and let it fix all the remaining
generators. Since $r_k\notin B$, we have
$h_k\in G_{\infty}(B)$. Moreover, $y_{r_k}$ occurs exactly once in
$w_{k-1}$. Since $h_k$ fixes $y_p$ and $j_k\neq p$, we obtain
\[
h_k\ast e_p(w_{k-1})-e_p(w_{k-1})=e_p(w_k).
\]
Since $e_p(w_1)\in N$, induction gives
$e_p([y_p,y_{j_2},\dots,y_{j_c}])=e_p(w_c)\in N$.

\smallskip
\noindent\emph{{\bf Step 2}.}
Let $s\in\Delta\cup R$, and let $v$ be a Lie commutator of length $c$
in which $y_s$ occurs exactly once. Choose pairwise distinct
auxiliary indices $q_2,\dots,q_c\in R$, different from $s$, from
$r_2,\dots,r_c$, and from all the indices occurring in $v$. Write
\[
S=\{p,s,r_2,\dots,r_c,q_2,\dots,q_c\}.
\]
The partial map which sends $p$ to $s$ and $r_k$ to $q_k$ for
$k=2,\dots,c$ is injective, and therefore extends to a permutation
$\pi$ of the finite set $S$. Extend $\pi$ by fixing every index
outside $S$. Then $\pi$ is finitary and supported on $S$. Since
$S\cap B=\varnothing$, we have $\pi\in G_{\infty}(B)$. Moreover,
\[
\pi\ast e_p([y_p,y_{r_2},\dots,y_{r_c}])
=e_s([y_s,y_{q_2},\dots,y_{q_c}])\in N.
\]
Repeating the argument of Step 1 with $s$ in place of $p$ and
$q_k$ in place of $r_k$, we obtain $e_s(v)\in N$.

\smallskip
\noindent\emph{{\bf Step 3}.}
Suppose that none of $i_1,\dots,i_c$ is equal to $p$. Write
$u=[y_p,y_{i_2},\dots,y_{i_c}]$. By Step 1, $e_p(u)\in N$.
Let $g$ be defined by $g(y_p)=y_p+y_{i_1}$ and let it fix all the
remaining generators. Since $p\notin B$, we have
$g\in G_{\infty}(B)$. Moreover,
\[
g(u)=u+[y_{i_1},y_{i_2},\dots,y_{i_c}],
\]
and 
\[
g\ast e_p(u)-e_p(u)
=e_p([y_{i_1},y_{i_2},\dots,y_{i_c}]).
\]
Hence $e_p([y_{i_1},\dots,y_{i_c}])\in N$. The same argument, with
$s$ in place of $p$ and using Step 2, shows that $e_s(v)\in N$
whenever $s\in\Delta\cup R$ and $v$ is a Lie commutator of length $c$
which does not contain $y_s$.

\smallskip
\noindent\emph{{\bf Step 4}.}
Let $w$ be a Lie commutator of length $c$ which contains $y_p$ and is
linear in every generator different from $y_p$. We use induction on
the number of occurrences of $y_p$. If $y_p$ occurs exactly once, 
the assertion follows from Step 2,
applied with $s=p$.

Suppose that $y_p$ occurs $d\geq2$ times. Choose $r\in R$ which does
not occur in $w$, replace one occurrence of $y_p$ by $y_r$, and
denote the resulting Lie commutator by $v$. Then $y_p$ occurs $d-1$
times in $v$, and $v$ is linear in every generator different from
$y_p$. Hence, by induction, $e_p(v)\in N$.

Let $g$ be defined by $g(y_r)=y_r+y_p$ and let it fix all the
remaining generators. Since $r\notin B$, we have
$g\in G_{\infty}(B)$. Moreover, $g^{-1}(y_r)=y_r-y_p$. Since $y_r$
occurs exactly once in $v$, we have $g(v)=v+w$, and
\[
\begin{split}
g\ast e_p(v)
&=e_p(g(v))-e_r(g(v))\\
&=e_p(v)+e_p(w)-e_r(v)-e_r(w).
\end{split}
\]
By Step 2, $e_r(v)\in N$, and by Step 3, $e_r(w)\in N$. Since
$g\ast e_p(v)\in N$ and $e_p(v)\in N$, it follows that
$e_p(w)\in N$.

\smallskip
\noindent\emph{{\bf Step 5}.}
Let $w=[y_{i_1},\dots,y_{i_c}]$ be an arbitrary Lie commutator in which
$y_p$ occurs at least twice. Replace every occurrence of a generator
different from $y_p$ by a different auxiliary generator indexed by
$R$, and denote the resulting Lie commutator by $v$. Then $v$ is linear
in every generator different from $y_p$, and hence Step 4 gives
$e_p(v)\in N$.

Let $y_{s_1},\dots,y_{s_t}$ be the auxiliary generators occurring
in $v$, and let $y_{j_k}$ be the generator of $w$ corresponding to
$y_{s_k}$. Write $v_0=v$, and, for $1\leq k\leq t$, let $v_k$ be
obtained from $v_{k-1}$ by replacing $y_{s_k}$ by $y_{j_k}$. Thus
$v_t=w$.

For $1\leq k\leq t$, let $h_k$ be defined by
$h_k(y_{s_k})=y_{s_k}+y_{j_k}$ and let it fix all the remaining
generators. Since $s_k\notin B$, we have
$h_k\in G_{\infty}(B)$. Moreover, $y_{s_k}$ occurs exactly once in
$v_{k-1}$, and hence multilinearity gives
\[
h_k(v_{k-1})=v_{k-1}+v_k.
\]
Since $s_k\in R$ and $p\in\Delta\subseteq\Omega$, we have
$s_k\neq p$. Moreover, $j_k\neq p$, since the auxiliary generator
$y_{s_k}$ replaced an occurrence of a generator different from
$y_p$. Thus $h_k$ fixes $y_p$, and $h_k^{-1}(y_i)$ has no
$y_p$-component for $i\neq p$. Hence 
\[
h_k\ast e_p(v_{k-1})
=e_p(h_k(v_{k-1}))
=e_p(v_{k-1})+e_p(v_k).
\]
Therefore
\[
h_k\ast e_p(v_{k-1})-e_p(v_{k-1})=e_p(v_k).
\]
Starting with $e_p(v_0)=e_p(v)\in N$, induction gives
$e_p(v_k)\in N$ for every $k$. Consequently,
$e_p(w)=e_p(v_t)\in N$.

Steps 3, 1, and 5 cover, respectively, the cases in which $y_p$
occurs zero times, exactly once, and at least twice. Hence
$e_p([y_{i_1},\dots,y_{i_c}])\in N$. 
Tracing the constructions in Steps 1--5, the fixed tuple
can be written as a finite sum
\[
e_p([y_{i_1},\dots,y_{i_c}])
=
\sum_{\nu=1}^{M}a_\nu g_\nu\ast u_\nu,
\]
where $a_\nu\in K$ and every
$g_\nu\in G_{\infty}(B)$ is finitary. Moreover,
\[
u_\nu
=e_{s_\nu}
([y_{s_\nu},y_{q_{\nu,2}},\dots,y_{q_{\nu,c}}]),
\]
where $s_\nu,q_{\nu,2},\dots,q_{\nu,c}$ are distinct,
$s_\nu\in\Delta\cup R$, and
$q_{\nu,2},\dots,q_{\nu,c}\in R$.
Each $g_\nu$ is a finite product of the transvections and
permutations used above.

Write
\[
\alpha=\sum_{p\in\Delta}e_p(v_p),
\qquad
v_p=\sum_{\mu}b_{p,\mu}w_{p,\mu},
\]
where each sum is finite, $b_{p,\mu}\in K$, and every
$w_{p,\mu}$ is a Lie commutator of length $c$ involving only generators
indexed by $\Omega$. Applying
the preceding construction to each $e_p(w_{p,\mu})$, multiplying the
resulting expression by $b_{p,\mu}$, and combining and renumbering the finitely many sums, we obtain
\[
\alpha=\sum_{\nu=1}^{M}a_\nu g_\nu\ast u_\nu.
\]

For each $\nu$, let $\theta_\nu$ be the standard 
automorphism corresponding, under the identification
$\IA_{\fin}(c,L_{\infty,c})\cong Q(c)$, to the tuple
$a_\nu u_\nu$. Thus, if
\[
u_\nu
=
e_{s_\nu}
([y_{s_\nu},y_{q_{\nu,2}},\dots,y_{q_{\nu,c}}]),
\]
then
\[
\theta_\nu(y_{s_\nu})
=
y_{s_\nu}
+
a_\nu
[y_{s_\nu},y_{q_{\nu,2}},\dots,y_{q_{\nu,c}}],
\]
and $\theta_\nu$ fixes all the remaining free generators.

By Remark \ref{rema1}, the tuple
$g_\nu\ast(a_\nu u_\nu)
=
a_\nu g_\nu\ast u_\nu
$
corresponds to the automorphism
$g_\nu\theta_\nu g_\nu^{-1}$.
Since addition in $Q(c)$ corresponds to multiplication in
$\IA_{\fin}(c,L_{\infty,c})$, it follows that
\[
\alpha
=
\prod_{\nu=1}^{M}
g_\nu\theta_\nu g_\nu^{-1}.
\]

Let $R_0\subseteq R$ be the finite set of all indices from $R$
occurring in the preceding constructions. Every transvection and
permutation used above fixes the generators indexed by $B$, maps
$\langle y_j:j\in\Omega\cup R_0\rangle$ onto itself, and fixes every
generator whose index does not belong to $\Omega\cup R_0$. Since
every $g_\nu$ is a finite product of these transformations, this
proves (1). By construction, every $u_\nu$ has the form
$e_{s_\nu}([y_{s_\nu},y_{q_{\nu,2}},\dots,y_{q_{\nu,c}}])$, where
$s_\nu,q_{\nu,2},\dots,q_{\nu,c}$ are distinct,
$s_\nu\in\Delta\cup R_0$, and
$q_{\nu,2},\dots,q_{\nu,c}\in R_0$. This proves (2). Since
$(\Delta\cup R_0)\cap B=\varnothing$, every $\theta_\nu$ fixes the
generators indexed by $B$. Since every $g_\nu$ also fixes these
generators, so does every
$g_\nu\theta_\nu g_\nu^{-1}$.
\end{proof}

We now combine Proposition \ref{propo2} and Lemma
\ref{lem:controlled-top} to obtain the controlled lifting result
needed below.

\begin{proposition}\label{prop:controlled-lifting}
Let $c\geq2$. Let $X=\{x_i:i\in\mathbb N\}$ be a free generating
set of $L_{\infty}$, and write
$\overline{x}_i=x_i+\gamma_{c+1}(L_{\infty})$. Let
$\Gamma,\Delta\subseteq\mathbb N$ satisfy
$\Gamma\cap\Delta=\varnothing$, $|\Delta|<\infty$, and
$|\mathbb N\setminus(\Gamma\cup\Delta)|=\infty$. Suppose that
$\alpha\in\IA(c,L_{\infty,c})$ and
$\alpha(\overline{x}_j)=\overline{x}_j$ for every $j\in\Gamma$.

Then there exists an automorphism $\xi$ of $L_{\infty}$ fixing
$x_j$ for every $j\in\Gamma$ such that the automorphism induced by
$\xi$ on $L_{\infty,c}$ belongs to
$\IA(c,L_{\infty,c})$ and
\[
\xi(x_i)+\gamma_{c+1}(L_{\infty})=
\begin{cases}
\alpha(\overline{x}_i),&i\in\Delta,\\
\overline{x}_i,&i\notin\Delta.
\end{cases}
\]
\end{proposition}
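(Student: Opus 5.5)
The plan is to truncate $\alpha$ to a finitary automorphism supported on $\Delta$, decompose it by Lemma \ref{lem:controlled-top}, lift each factor, and multiply the lifts. Throughout, all index bookkeeping is done relative to $X$. The automorphism of $L_{\infty}$ sending $\ell_i\mapsto x_i$ induces an automorphism of $L_{\infty,c}$ sending $y_i\mapsto\overline{x}_i$. Conjugating by it, I may assume $x_i=\ell_i$ and $\overline{x}_i=y_i$; everything stated (fixing generators, membership in $\IA(c,L_{\infty,c})$) transfers back.

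\emph{Truncation and set-up.} First define $\alpha'$ on $L_{\infty,c}$ by $\alpha'(y_i)=\alpha(y_i)$ for $i\in\Delta$ and $\alpha'(y_i)=y_i$ otherwise. By Lemma \ref{lemm1}, $\alpha'\in\IA_{\fin}(c,L_{\infty,c})$ with support in $\Delta$, and $\alpha'$ is exactly the map the conclusion asks $\xi$ to induce. Each of the finitely many elements $\alpha(y_i)-y_i$ ($i\in\Delta$) involves only finitely many generators. Let $\Omega$ be $\Delta$ together with all these indices, so $\Omega$ is finite and condition (ii) of Lemma \ref{lem:controlled-top} holds. Put $B=\Gamma\cap\Omega$, which is finite and disjoint from $\Delta$. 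Choose an infinite $R\subseteq\mathbb N\setminus(\Gamma\cup\Omega)$, which is possible because $\mathbb N\setminus(\Gamma\cup\Delta)$ is infinite and $\Omega$ is finite. Lemma \ref{lem:controlled-top} then gives
\[
\alpha'=\prod_{\nu=1}^{M} g_\nu\theta_\nu g_\nu^{-1}.
\]
Every $g_\nu$ fixes the $y_j$ with $j\in B$ and with $j\notin\Omega\cup R_0$, hence fixes all $y_j$ with $j\in\Gamma$, since $R_0\cap\Gamma=\varnothing$. Each $\theta_\nu$ involves only indices in $\Delta\cup R_0$, which is disjoint from $\Gamma$.

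\emph{Lifting the factors.} Each $g_\nu$ is linear and finitary, so it lifts to the linear automorphism $\hat g_\nu$ of $L_{\infty}$ given by the same matrix; $\hat g_\nu$ fixes $\ell_j$ for $j\in\Gamma$. For $\theta_\nu$, with indices $s,q_2,\dots,q_c$ and scalar $a$, the case $a=0$ is trivial. For $a\neq0$, conjugating $\sigma$ of Proposition \ref{propo2} by the relabeling $1\mapsto s$, $k\mapsto q_k$ and then by the diagonal map $y_{q_2}\mapsto a\,y_{q_2}$ gives $\theta_\nu$.

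The lift $\eta$ from Proposition \ref{propo2} moves infinitely many auxiliary generators, and I must keep them out of $\Gamma$. To do this, choose an injection $\pi_\nu\colon\mathbb N\to\mathbb N$ with $\pi_\nu(1)=s$ and $\pi_\nu(k)=q_k$ for $2\le k\le c$. On the remaining indices, $\pi_\nu$ maps into an infinite set disjoint from $\Gamma\cup\Omega\cup R_0$; this set exists because only finitely many indices have been removed from the infinite set $\mathbb N\setminus(\Gamma\cup\Delta)$. Let $\hat\pi_\nu$ be the induced injective endomorphism $\ell_i\mapsto\ell_{\pi_\nu(i)}$, and define
\[
\xi_\nu(\ell_{\pi_\nu(i)})=\hat\pi_\nu(\eta(\ell_i)),\qquad \xi_\nu(\ell_j)=\ell_j\ \ (j\notin\pi_\nu(\mathbb N)).
\]
The same recipe applied to $\eta^{-1}$ gives a two-sided inverse on generators, so $\xi_\nu$ is an automorphism, and the diagonal scaling is then absorbed by conjugation. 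Since $\eta$ induces $\sigma$, the map $\xi_\nu$ induces $\theta_\nu$ on $L_{\infty,c}$: the auxiliary generators are fixed modulo $\gamma_{c+1}$. Moreover $\xi_\nu$ fixes every $\ell_j$ with $j\in\Gamma$, because $\Gamma$ is disjoint from the image of $\pi_\nu$.

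\emph{Conclusion and main obstacle.} Set $\xi=\prod_\nu \hat g_\nu\xi_\nu\hat g_\nu^{-1}$. It fixes $\ell_j$ for $j\in\Gamma$ and induces $\alpha'\in\IA(c,L_{\infty,c})$, which is the required formula. I expect the delicate step to be the index bookkeeping. The infinitely supported lift of each standard automorphism must be transported so that it avoids $\Gamma$ and stays disjoint from the indices used by $\Omega$, $R_0$ and the $g_\nu$. One must also check that this transport, done via an injection rather than a permutation, still yields an automorphism whose induced map is exactly $\theta_\nu$.
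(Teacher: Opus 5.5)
Your proposal is correct and follows essentially the paper's route: truncate $\alpha$ to a finitary automorphism supported on $\Delta$, decompose it via Lemma \ref{lem:controlled-top} with $B=\Gamma\cap\Omega$ and $R$ avoiding $\Gamma\cup\Omega$, lift each $g_\nu$ linearly, and lift each $\theta_\nu$ by transporting the lift from Proposition \ref{propo2} onto a free subalgebra whose generator indices avoid $\Gamma$, followed by a diagonal rescaling. Your injection $\pi_\nu$ is just an explicit form of the paper's ``relabelling of $L(I)$ with $I\cap\Gamma=\varnothing$.'' The remaining differences are cosmetic: you reduce to the standard generating set at the start rather than the end, and you require the auxiliary indices to avoid $\Omega\cup R_0$, which is harmless but unnecessary (the paper's $I$ even contains $\Delta\cup R_0$).
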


\begin{proof}
We first prove the result for the standard free generating set
$X=\{\ell_i:i\in\mathbb N\}$. Write
$\alpha(y_i)=y_i+v_i$, where
$v_i\in\gamma_c(L_{\infty,c})$. Let $\alpha_{\Delta}$ be the
endomorphism of $L_{\infty,c}$ defined by
\[
\alpha_{\Delta}(y_i)=y_i+v_i\quad(i\in\Delta),
\qquad
\alpha_{\Delta}(y_j)=y_j\quad(j\notin\Delta).
\]
Since $c\geq2$, $\gamma_c(L_{\infty,c})\subseteq
\gamma_2(L_{\infty,c})=L_{\infty,c}'$. Therefore
$\alpha_{\Delta}$ induces the identity on
$L_{\infty,c}/L_{\infty,c}'$ and hence, by
Lemma \ref{lemm1}, it is an automorphism. Moreover,
its support is contained in the finite set $\Delta$ and
$\alpha_{\Delta}(y_i)-y_i\in\gamma_c(L_{\infty,c})$ for every
$i\in\mathbb N$. Therefore
$\alpha_{\Delta}\in\IA_{\fin}(c,L_{\infty,c})$.

Choose a finite set $\Omega\subseteq\mathbb N$ which contains
$\Delta$ and all the indices occurring in fixed commutator
expressions for the elements $v_i$, where $i\in\Delta$. Write
$B=\Gamma\cap\Omega$. Then $B\cap\Delta=\varnothing$. Since
$\mathbb N\setminus(\Gamma\cup\Delta)$ is infinite and $\Omega$ is
finite, the set $\mathbb N\setminus(\Gamma\cup\Omega)$ is infinite.
Choose an infinite set $R\subseteq\mathbb N\setminus(\Gamma\cup\Omega)$.

The hypotheses of Lemma \ref{lem:controlled-top} are now satisfied.
Indeed, $B,\Delta,\Omega$ are finite,
$B\cap\Delta=\varnothing$, $\Delta\subseteq\Omega$, and
$R\subseteq\mathbb N\setminus(\Gamma\cup\Omega)
\subseteq\mathbb N\setminus(B\cup\Omega)$.
Moreover, $\alpha_{\Delta}\in\IA_{\fin}(c,L_{\infty,c})$, its
support is contained in $\Delta$, and
$v_i\in\gamma_c(L_{\Omega,c})$ for every $i\in\Delta$.

Therefore, by Lemma \ref{lem:controlled-top}, there exist a finite
set $R_0\subseteq R$, finitary linear automorphisms
$g_1,\dots,g_s$, and standard automorphisms
$\theta_1,\dots,\theta_s$ such that
\[
\alpha_{\Delta}
=\rho_1\cdots\rho_s,
\qquad
\rho_k=g_k\theta_k g_k^{-1}.
\]
For every $k$, the automorphism $g_k$ fixes $y_j$ for $j\in B$,
preserves
$\langle y_j:j\in\Omega\cup R_0\rangle$, and fixes every $y_j$ with
$j\notin\Omega\cup R_0$. Moreover, $\theta_k$ has the form
\[
\theta_k(y_{p_k})
=y_{p_k}
+a_k[y_{p_k},y_{q_{k,2}},\dots,y_{q_{k,c}}],
\qquad
\theta_k(y_i)=y_i\quad(i\neq p_k),
\]
where $a_k\in K$, the indices
$p_k,q_{k,2},\dots,q_{k,c}$ are pairwise distinct,
$p_k\in\Delta\cup R_0$, and
$q_{k,2},\dots,q_{k,c}\in R_0$.

Every $g_k$ fixes the generators indexed by $\Gamma$. Indeed, if
$j\in\Gamma\cap\Omega=B$, then $g_k(y_j)=y_j$ by Lemma
\ref{lem:controlled-top}. If $j\in\Gamma\setminus\Omega$, then
$j\notin R_0$, since $R_0\subseteq R$ and
$R\cap\Gamma=\varnothing$. Hence
$j\notin\Omega\cup R_0$, and Lemma \ref{lem:controlled-top} again
gives $g_k(y_j)=y_j$.
Since $(\Delta\cup R_0)\cap\Gamma=\varnothing$, choose a countably
infinite set
$I\subseteq\mathbb N\setminus\Gamma$ 
containing $\Delta\cup R_0$, and let $L(I)$ be the free Lie subalgebra
of $L_{\infty}$ generated by $\{\ell_i:i\in I\}$.
For each $k=1,\dots,s$, after relabelling the free generators of
$L(I)$, Proposition
\ref{propo2} gives an automorphism $\widehat{\sigma}_k$ of $L(I)$
which induces the coefficient-one standard automorphism
with nonzero coordinate $p_k$ and commutator indices
$q_{k,2},\dots,q_{k,c}$.

If $a_k\neq0$, let $d_k$ be the linear automorphism of $L(I)$ defined
by
\[
d_k(\ell_{q_{k,2}})=a_k\ell_{q_{k,2}},
\qquad
d_k(\ell_i)=\ell_i\quad(i\in I,\ i\neq q_{k,2}),
\]
and write
$\widehat{\theta}_k
=d_k\widehat{\sigma}_k d_k^{-1}$.
Then $\widehat{\theta}_k$ induces $\theta_k$ on
$L(I)/\gamma_{c+1}(L(I))$. If $a_k=0$, let
$\widehat{\theta}_k$ be the identity automorphism of $L(I)$.
Extend $\widehat{\theta}_k$ to an automorphism of $L_{\infty}$ by
fixing every $\ell_i$ with $i\notin I$. Since
$\gamma_{c+1}(L(I))\subseteq\gamma_{c+1}(L_{\infty})$, the extended
automorphism induces $\theta_k$ on $L_{\infty,c}$. Since
$I\cap\Gamma=\varnothing$, it fixes every $\ell_j$ with
$j\in\Gamma$.

Let $\widetilde{g}_k$ be the linear automorphism of $L_{\infty}$
defined on the free generators by the same formulas as $g_k$. It
induces $g_k$ on $L_{\infty,c}$ and fixes every $\ell_j$ with
$j\in\Gamma$. Define
\[
\xi=
\prod_{k=1}^{s}
\widetilde{g}_k\widehat{\theta}_k\widetilde{g}_k^{-1}.
\]
Every factor in this product fixes the generators indexed by
$\Gamma$. Hence
$\xi(\ell_j)=\ell_j$ for every 
$j\in\Gamma$.
Furthermore,
\[
\overline{\xi}
=\prod_{k=1}^{s}g_k\theta_k g_k^{-1}
=\alpha_{\Delta}.
\]
Therefore $\overline{\xi}\in\IA(c,L_{\infty,c})$,
$\overline{\xi}(y_i)=\alpha(y_i)$ for every $i\in\Delta$, and
$\overline{\xi}(y_j)=y_j$ for every $j\notin\Delta$. This proves the
result for the standard free generating set.

Now let $X=\{x_i:i\in\mathbb N\}$ be an arbitrary free generating set
of $L_{\infty}$. Choose $\lambda\in\Aut(L_{\infty})$ such that
$\lambda(\ell_i)=x_i$ for every $i\in\mathbb N$, and let
$\overline{\lambda}$ be the automorphism induced by $\lambda$ on
$L_{\infty,c}$. Since $\IA(c,L_{\infty,c})$ is normal in
$\Aut(L_{\infty,c})$, we have
\[
\alpha_0
=\overline{\lambda}^{-1}\alpha\overline{\lambda}
\in\IA(c,L_{\infty,c}).
\]
For every $j\in\Gamma$,
\[
\alpha_0(y_j)=\overline{\lambda}^{-1}\alpha\overline{\lambda}(y_j)=\overline{\lambda}^{-1}\alpha(\overline{x}_j)=\overline{\lambda}^{-1}(\overline{x}_j)=y_j.
\]

Apply the case already proved for the standard free generating set
to $\alpha_0$, with the same index sets $\Gamma$ and $\Delta$, and
let $\xi_0$ be the resulting automorphism of $L_{\infty}$. If
$\overline{\xi_0}$ denotes the automorphism induced by $\xi_0$ on
$L_{\infty,c}$, then $\xi_0(\ell_j)=\ell_j$ for every
$j\in\Gamma$, $\overline{\xi_0}\in\IA(c,L_{\infty,c})$, and
\[
\overline{\xi_0}(y_i)=
\begin{cases}
\alpha_0(y_i),&i\in\Delta,\\
y_i,&i\notin\Delta.
\end{cases}
\]

Write $\xi=\lambda\xi_0\lambda^{-1}$. Then, for every
$j\in\Gamma$,
\[
\xi(x_j)=\lambda\xi_0(\ell_j)=\lambda(\ell_j)=x_j.
\]
Moreover,
$\overline{\xi}
=\overline{\lambda}\,\overline{\xi_0}\,\overline{\lambda}^{-1}$,
so $\overline{\xi}\in\IA(c,L_{\infty,c})$ by normality. Since
$\overline{\lambda}(y_i)=\overline{x}_i$ and
$\overline{\lambda}\alpha_0=\alpha\overline{\lambda}$, we obtain
\[
\overline{\xi}(\overline{x}_i)
=
\begin{cases}
\alpha(\overline{x}_i),&i\in\Delta,\\
\overline{x}_i,&i\notin\Delta.
\end{cases}
\]
\end{proof}

A finite subset of a free generating set of $L_{\infty}$ will be
called a \emph{finite primitive system}. Equivalently, it is a finite
subset of $L_{\infty}$ which is contained in some free generating set.

The proof of the following proposition adapts the
construction used in the proof of
\cite[Proposition~2]{brma} to the present Lie-algebraic setting.

\begin{proposition}\label{propo3}
Let $c\geq2$. Then every automorphism in
$\IA(c,L_{\infty,c})$ lifts to an automorphism of $L_{\infty}$.
\end{proposition}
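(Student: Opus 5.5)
Given $\alpha\in\IA(c,L_{\infty,c})$, I will construct the lift as an infinite composition of automorphisms of $L_\infty$, each supplied by Proposition~\ref{prop:controlled-lifting}, in the style of \cite[Proposition~2]{brma}.

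\textbf{Setup.} Enumerate $\mathbb N$ and build, by induction on $n$:
\begin{itemize}
\item an increasing sequence of finite primitive systems, together with automorphisms $\xi_1,\xi_2,\dots$ of $L_\infty$;
\item a sequence of free generating sets $X^{(n)}$, with $X^{(0)}=\{\ell_i\}$.
\end{itemize}
The partial composites $\Xi_n=\xi_n\cdots\xi_1$ (or the opposite order, to be fixed) should stabilize on each $\ell_i$. Their pointwise limit is then an endomorphism $\Xi$ of $L_\infty$ inducing $\alpha$.

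\textbf{Inductive step.} At stage $n$ I keep a finite set $\Gamma_n$ of indices whose generators have already been ``corrected''. That is, $\overline{\Xi_n}$ agrees with $\alpha$ on the images of the corresponding $\ell_i$.

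Consider the residual automorphism $\alpha_n=\overline{\Xi_n}^{-1}\alpha$ (or $\alpha\,\overline{\Xi_n}^{-1}$, whichever is convenient). It lies in $\IA(c,L_{\infty,c})$ because this subgroup is normal. It fixes the images of the already-corrected generators, expressed in the free generating set $X^{(n)}=\Xi_n(\{\ell_i\})$.

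Let $\Delta$ consist of the next index not yet handled, and put $\Gamma=\Gamma_n$. Since $\Gamma_n$ is finite, the complement of $\Gamma\cup\Delta$ is infinite. Proposition~\ref{prop:controlled-lifting}, applied with $X=X^{(n)}$, then gives $\xi$ with the following properties:
\begin{itemize}
\item $\xi$ fixes $x_j$ for $j\in\Gamma$;
\item $\xi$ corrects the generator indexed by $\Delta$;
\item $\xi$ acts trivially modulo $\gamma_{c+1}$ on all other generators.
\end{itemize}
Composing with $\xi$ enlarges $\Gamma_{n+1}=\Gamma_n\cup\Delta$. Generators already fixed stay fixed exactly in $L_\infty$, not only modulo $\gamma_{c+1}$, because $\xi$ fixes the corresponding elements of $X^{(n)}$.

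\textbf{Main obstacle: invertibility of the limit.} Stabilization makes $\Xi$ a well-defined endomorphism. It need not be an automorphism, since the images of the untouched generators are not controlled exactly. To handle this I would alternate stages in the back-and-forth manner of Bryant--Macedo\'nska.

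\begin{itemize}
\item \emph{Odd stages} fix a new $\ell_i$ in the forward direction, so that $\Xi$ stabilizes on the $\ell_i$.
\item \emph{Even stages} apply the same construction to the inverse maps. Using Proposition~\ref{prop:controlled-lifting} for $\alpha^{-1}$ with respect to the generating set $X^{(n)}$, they ensure that each $\ell_i$ eventually lies in the subalgebra generated by an already-stabilized finite primitive system.
\end{itemize}

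For this I need $\Gamma$ to be the set of indices, in the current generating set $X^{(n)}$, of elements already frozen from both sides. The freedom to choose $X$ arbitrary in Proposition~\ref{prop:controlled-lifting} is exactly what permits this. The limit $\Xi$ is then surjective, since every $\ell_i$ is eventually in its image. It is injective because its restriction to each stabilized finite primitive system agrees with an automorphism. Hence $\Xi$ is an automorphism of $L_\infty$.

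Finally, $\overline{\Xi}$ agrees with $\alpha$ on every $y_i$ by construction, so $\overline{\Xi}=\alpha$ and $\alpha$ lifts. The delicate bookkeeping will be to verify that the two-sided freezing is compatible with the hypothesis that $\alpha_n$ fixes $\overline{x}_j$ for all $j\in\Gamma$.
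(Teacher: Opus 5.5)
Your forward-only construction is sound, and you correctly see that the real difficulty is surjectivity, handled by a Bryant--Macedo\'nska back-and-forth. But that back step is the actual content of the proposition, and you leave it as ``delicate bookkeeping''. Worse, the bookkeeping as you set it up cannot carry it. You index the frozen data by a finite set $\Gamma_n\subseteq\mathbb N$ of standard generators inside $X^{(n)}=\Xi_n(\{\ell_i\})$, and you speak of freezing the $\ell_i$ ``from both sides''.

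This breaks down in two ways. Let $\pi\colon L_\infty\to L_{\infty,c}$ be the projection.
\begin{enumerate}
\item A back step must freeze domain elements of the form $\Xi^{-1}(d)$. For example, $\pi(\Xi^{-1}(\ell_1))=\alpha^{-1}(y_1)\neq y_1$ whenever $\alpha(y_1)\neq y_1$. Such elements are not standard generators, so they cannot be indexed inside $\Xi_n(\{\ell_i\})$.
\item You cannot freeze $\ell_1$ on both sides. Since $\alpha\in\IA(c,L_{\infty,c})$ and $c\geq2$, we have $\Xi(\ell_1)\equiv\ell_1\equiv\Xi^{-1}(\ell_1)$ modulo $L_{\infty}'$. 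Two distinct elements with the same image in $L_{\infty}/L_{\infty}'$ never lie in a common free generating set. Hence Proposition~\ref{prop:controlled-lifting} cannot be applied with both of them indexed by $\Gamma$.
\end{enumerate}
Also, $\alpha^{-1}$ itself is not an admissible input to that proposition. It sends $\pi(\Xi(u))=\alpha(\pi(u))$ to $\pi(u)$, so it does not fix the already frozen images; you need a residual instead.

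The missing idea, which is how the paper proceeds, is to record the frozen data as a bijection $\theta_n\colon U_n\to V_n$ between two increasing finite primitive systems with $\pi\theta_n=\alpha\pi$ on $U_n$. Here $U_n$ is allowed to contain non-standard elements, and one demands only $\ell_{m+1}\in L(U_{2m+1})$ and $\ell_{m+1}\in L(V_{2m+2})$, not membership. Let $h_n$ be the automorphism induced by $\theta_n$.

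\emph{Forward step.} Extend $U_{2m}$ to some free generating set $X$, and take a finite $E\subseteq X\setminus U_{2m}$ with $\ell_{m+1}\in L(U_{2m}\cup E)$. Apply Proposition~\ref{prop:controlled-lifting} to $h_{2m}^{-1}\alpha$, which fixes $\pi(U_{2m})$, to get $\xi$ fixing $U_{2m}$. Set $\theta_{2m+1}=\theta_{2m}\xi$ and $U_{2m+1}=U_{2m}\cup E$.

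\emph{Back step.} Extend $V_{2m+1}$ to a free generating set $Y$, and take a finite $D\subseteq Y\setminus V_{2m+1}$ with $\ell_{m+1}\in L(V_{2m+1}\cup D)$. Apply the proposition to $\gamma^{-1}$, where $\gamma=\alpha h_{2m+1}^{-1}$ fixes $\pi(V_{2m+1})$, to get $\zeta$ fixing $V_{2m+1}$. Set $\theta_{2m+2}=\zeta^{-1}\theta_{2m+1}$, $U_{2m+2}=U_{2m+1}\cup\theta_{2m+1}^{-1}\zeta(D)$ and $V_{2m+2}=V_{2m+1}\cup D$. One then checks that $\alpha\pi(e)=\pi(d)$ for $e=\theta_{2m+1}^{-1}\zeta(d)$.

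The unions $U$ and $V$ are free generating sets, and $\theta\colon U\to V$ extends to the required automorphism. Your stabilization on each $\ell_i$ does hold in this scheme, but only as a consequence of $\ell_i\in L(U_n)$, not as something you can impose directly.
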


\begin{proof}
Let $\pi\colon L_{\infty}\longrightarrow L_{\infty,c}$ be the natural
epimorphism. Write $H=\IA(c,L_{\infty,c})$ and fix $\alpha\in H$. We construct finite primitive systems
\[
U_0\subseteq U_1\subseteq U_2\subseteq\cdots,
\qquad
V_0\subseteq V_1\subseteq V_2\subseteq\cdots,
\]
and automorphisms $\theta_n\in\Aut(L_{\infty})$ with the following
properties. If $h_n$ denotes the automorphism induced by $\theta_n$ on
$L_{\infty,c}$, then
\begin{align}
h_n&\in H, \label{eq:globalization-top-IA}\\
V_n&=\theta_n(U_n), \label{eq:globalization-images}\\
\pi(\theta_n(u))&=\alpha(\pi(u))
\qquad(u\in U_n). \label{eq:globalization-compatibility}
\end{align}
Moreover, for every $n\geq1$, the restriction of
$\theta_n\colon U_n\to V_n$ to $U_{n-1}$ agrees with
$\theta_{n-1}\colon U_{n-1}\to V_{n-1}$.

Start with $U_0=V_0=\varnothing$ and
$\theta_0=\mathrm{Id}_{L_{\infty}}$. The automorphism induced by
$\theta_0$ is the identity, which belongs to $H$, and the remaining
conditions are immediate.

\smallskip
\noindent\emph{{\bf Step 1}.}
Suppose that $U_{2m}$, $V_{2m}$ and $\theta_{2m}$ have been
constructed. Choose and enumerate a free generating set
$X=\{x_i:i\in\mathbb N\}$ containing $U_{2m}$. Since $X$ generates $L_{\infty}$, the standard free generator
$\ell_{m+1}$ belongs to the Lie subalgebra generated by a finite subset
of $X$. Hence there is a finite subset
$E\subseteq X\setminus U_{2m}$ such that
\[
\ell_{m+1}\in L(U_{2m}\cup E).
\]
Since $U_{2m}$ and $E$ are disjoint finite subsets of $X$, there
exist unique disjoint finite sets
$\Gamma_X,\Delta_X\subseteq\mathbb N$ such that
\[
U_{2m}=\{x_i:i\in\Gamma_X\},
\qquad
E=\{x_i:i\in\Delta_X\}.
\]  
In particular,
$|\mathbb N\setminus(\Gamma_X\cup\Delta_X)|=\infty$.

Write $\beta=h_{2m}^{-1}\alpha$. Since $h_{2m},\alpha\in H$, we have
$\beta\in H$. We claim that $\beta$ fixes $\pi(x_i)$ for every
$i\in\Gamma_X$. Indeed, for $i\in\Gamma_X$, we have $x_i\in U_{2m}$, and hence
\eqref{eq:globalization-compatibility} gives
\[
\alpha(\pi(x_i))
=\pi(\theta_{2m}(x_i))
=h_{2m}(\pi(x_i)).
\]
Consequently,
\[
\beta(\pi(x_i))=h_{2m}^{-1}\alpha(\pi(x_i))=\pi(x_i).
\]

The hypotheses of Proposition \ref{prop:controlled-lifting} are
satisfied: $X$ is a free generating set,
$\Gamma_X$ and $\Delta_X$ are finite and disjoint,
$\mathbb N\setminus(\Gamma_X\cup\Delta_X)$ is infinite,
$\beta\in H=\IA(c,L_{\infty,c})$, and $\beta$ fixes
$\pi(x_i)$ for every $i\in\Gamma_X$. Therefore Proposition \ref{prop:controlled-lifting} gives
an automorphism $\xi$ of $L_{\infty}$ which fixes $U_{2m}$ pointwise
and whose induced automorphism $\kappa$ belongs to $H$. Moreover,
\[
\pi(\xi(e))=\beta(\pi(e))
\qquad(e\in E).
\]

Define 
\[
\theta_{2m+1}=\theta_{2m}\xi,\quad
U_{2m+1}=U_{2m}\cup E, \quad
V_{2m+1}=\theta_{2m+1}(U_{2m+1}).
\]
Since
$U_{2m+1}\subseteq X$, the set $U_{2m+1}$ is a finite primitive
system. Moreover, since $\theta_{2m+1}$ is an automorphism and $X$ is a free
generating set, $\theta_{2m+1}(X)$ is a free generating set. Since
$V_{2m+1}\subseteq\theta_{2m+1}(X)$, the set $V_{2m+1}$ is also a
finite primitive system.

The automorphism induced by $\theta_{2m+1}$ is
$h_{2m+1}=h_{2m}\kappa$, and therefore $h_{2m+1}\in H$. If
$u\in U_{2m}$, then $\xi(u)=u$, and hence
\[
\theta_{2m+1}(u)=\theta_{2m}\xi(u)=\theta_{2m}(u).
\] 
Thus $\theta_{2m+1}$ agrees with $\theta_{2m}$ on $U_{2m}$. In particular,
\[
V_{2m}=\theta_{2m+1}(U_{2m})\subseteq V_{2m+1}.
\]
For $e\in E$, we have
\[
\pi(\theta_{2m+1}(e))
=\pi(\theta_{2m}\xi(e))
=h_{2m}(\pi(\xi(e)))
=h_{2m}\beta(\pi(e))
=\alpha(\pi(e)).
\]
Together with the preceding agreement on $U_{2m}$ and
\eqref{eq:globalization-compatibility} for $n=2m$, this proves
\eqref{eq:globalization-compatibility} on $U_{2m+1}$. Thus
\eqref{eq:globalization-top-IA}--\eqref{eq:globalization-compatibility}
are preserved. By the choice of $E$,
\begin{equation}\label{eq:globalization-forth-generation}
\ell_{m+1}\in L(U_{2m+1}).
\end{equation}

\smallskip
\noindent\emph{{\bf Step 2}.}
Suppose that $U_{2m+1}$, $V_{2m+1}$ and $\theta_{2m+1}$ have been
constructed. Choose and enumerate a free generating set
$Y=\{z_i:i\in\mathbb N\}$ containing $V_{2m+1}$. Since $Y$ generates
$L_{\infty}$, the standard free generator $\ell_{m+1}$ belongs to
the Lie subalgebra generated by a finite subset of $Y$. Hence there
is a finite subset $D\subseteq Y\setminus V_{2m+1}$ such that
\[
\ell_{m+1}\in L(V_{2m+1}\cup D).
\]
Since $V_{2m+1}$ and $D$ are disjoint finite subsets of $Y$, there
exist unique disjoint finite sets
$\Gamma_Y,\Delta_Y\subseteq\mathbb N$ such that
\[
V_{2m+1}=\{z_i:i\in\Gamma_Y\},
\qquad
D=\{z_i:i\in\Delta_Y\}.
\]
In particular,
$|\mathbb N\setminus(\Gamma_Y\cup\Delta_Y)|=\infty$.

Write $\gamma=\alpha h_{2m+1}^{-1}$. Since
$\alpha,h_{2m+1}\in H$, we have $\gamma\in H$. We claim that
$\gamma$ fixes $\pi(z_i)$ for every $i\in\Gamma_Y$. Indeed, for $i\in\Gamma_Y$, we have
$z_i\in V_{2m+1}=\theta_{2m+1}(U_{2m+1})$. Hence there exists
$u\in U_{2m+1}$ such that $z_i=\theta_{2m+1}(u)$.
Hence
$\pi(z_i)=h_{2m+1}(\pi(u))$,
and, by \eqref{eq:globalization-compatibility},
\[
\gamma(\pi(z_i))=\alpha h_{2m+1}^{-1}(h_{2m+1}(\pi(u)))=\alpha(\pi(u))=\pi(\theta_{2m+1}(u))=\pi(z_i).
\]
It follows that $\gamma^{-1}\in H$ also fixes $\pi(z_i)$ for every
$i\in\Gamma_Y$.

The hypotheses of Proposition \ref{prop:controlled-lifting} are
satisfied: $Y$ is a free generating set,
$\Gamma_Y$ and $\Delta_Y$ are finite and disjoint,
$\mathbb N\setminus(\Gamma_Y\cup\Delta_Y)$ is infinite,
$\gamma^{-1}\in H=\IA(c,L_{\infty,c})$, and $\gamma^{-1}$ fixes
$\pi(z_i)$ for every $i\in\Gamma_Y$. Therefore Proposition
\ref{prop:controlled-lifting} gives an automorphism $\zeta$ of
$L_{\infty}$ which fixes $V_{2m+1}$ pointwise and whose induced
automorphism $\mu$ belongs to $H$. Moreover,
\[
\pi(\zeta(d))=\gamma^{-1}(\pi(d))
\qquad(d\in D).
\]

Define
\[
\theta_{2m+2}=\zeta^{-1}\theta_{2m+1},\qquad U_{2m+2}=U_{2m+1}\cup\theta_{2m+1}^{-1}(\zeta(D)),\qquad V_{2m+2}=V_{2m+1}\cup D.
\]
The two sets occurring in the union defining $U_{2m+2}$ are
disjoint. Indeed, suppose that $\theta_{2m+1}^{-1}(\zeta(d))\in U_{2m+1}$
for some $d\in D$. Then
$\zeta(d)\in\theta_{2m+1}(U_{2m+1})=V_{2m+1}$.
Since $\zeta^{-1}$ fixes $V_{2m+1}$ pointwise, it follows that
$d=\zeta^{-1}(\zeta(d))\in V_{2m+1}$,
contrary to $D\cap V_{2m+1}=\varnothing$.
Since $V_{2m+2}=V_{2m+1}\cup D\subseteq Y$,
the set $V_{2m+2}$ is a finite primitive system. Moreover, since
$\zeta$ fixes $V_{2m+1}$ pointwise,
\[
U_{2m+2}=\theta_{2m+1}^{-1}(V_{2m+1}\cup\zeta(D))=\theta_{2m+1}^{-1}\zeta(V_{2m+1}\cup D)\subseteq\theta_{2m+1}^{-1}\zeta(Y).
\]
Since $\theta_{2m+1}^{-1}\zeta$ is an automorphism of
$L_{\infty}$ and $Y$ is a free generating set, the set
$\theta_{2m+1}^{-1}\zeta(Y)$ is also a free generating set of
$L_{\infty}$. Hence $U_{2m+2}$ is also a finite primitive system.

The automorphism induced by $\theta_{2m+2}$ is
$h_{2m+2}=\mu^{-1}h_{2m+1}$.
Since $\mu,h_{2m+1}\in H$, we have $h_{2m+2}\in H$.

Since $\zeta^{-1}$ fixes $V_{2m+1}$ pointwise,
\[
\theta_{2m+2}(U_{2m+1})=\zeta^{-1}\theta_{2m+1}(U_{2m+1})=\zeta^{-1}(V_{2m+1})=V_{2m+1}.
\]
Furthermore, for every $d\in D$,
\[
\theta_{2m+2}\bigl(\theta_{2m+1}^{-1}(\zeta(d))\bigr)=\zeta^{-1}\theta_{2m+1}\theta_{2m+1}^{-1}(\zeta(d))=d.
\]
Therefore
\[
V_{2m+2}=\theta_{2m+2}(U_{2m+2}).
\]

If $u\in U_{2m+1}$, then
$\theta_{2m+1}(u)\in V_{2m+1}$, and hence
\[
\theta_{2m+2}(u)=\zeta^{-1}\theta_{2m+1}(u)=\theta_{2m+1}(u).
\]
Thus $\theta_{2m+2}$ agrees with $\theta_{2m+1}$ on
$U_{2m+1}$.

It remains to verify
\eqref{eq:globalization-compatibility} on the newly added elements.
Let $e\in U_{2m+2}\setminus U_{2m+1}$.
By the definition of $U_{2m+2}$ and the disjointness proved above,
there exists $d\in D$ such that $e=\theta_{2m+1}^{-1}(\zeta(d))$.
Hence
\[
\theta_{2m+1}(e)=\zeta(d)
\]
and
\[
\theta_{2m+2}(e)=\zeta^{-1}\theta_{2m+1}(e)=d.
\]
Since $h_{2m+1}$ is induced by $\theta_{2m+1}$, we have
\[
h_{2m+1}(\pi(e))=\pi(\theta_{2m+1}(e))=\pi(\zeta(d))
\]
and hence
\[
\pi(e)=h_{2m+1}^{-1}(\pi(\zeta(d))).
\]
On the other hand, the choice of $\zeta$ gives
\[
\pi(\zeta(d))=\gamma^{-1}(\pi(d)).
\]
Therefore
\[
\alpha(\pi(e))=\alpha h_{2m+1}^{-1}(\pi(\zeta(d)))=\gamma(\pi(\zeta(d)))=\gamma\gamma^{-1}(\pi(d))=\pi(d)=\pi(\theta_{2m+2}(e)).
\]
Together with the preceding agreement on $U_{2m+1}$ and
\eqref{eq:globalization-compatibility} for $n=2m+1$, this proves \eqref{eq:globalization-compatibility} on
$U_{2m+2}$. Hence
\eqref{eq:globalization-top-IA}--\eqref{eq:globalization-compatibility}
are again preserved.

Finally, by the choice of $D$ and the definition of $V_{2m+2}$,
\begin{equation}\label{eq:globalization-back-generation}
\ell_{m+1}\in L(V_{2m+1}\cup D)=L(V_{2m+2}).
\end{equation}

\smallskip
\noindent\emph{{\bf Step 3}.}
Continue the construction for all $m\geq0$, and write
\[
U=\bigcup_{n\geq0}U_n,
\qquad
V=\bigcup_{n\geq0}V_n.
\]
For every $i\geq1$, taking $m=i-1$ in
\eqref{eq:globalization-forth-generation} gives
$\ell_i\in L(U_{2i-1})\subseteq L(U)$, whereas
\eqref{eq:globalization-back-generation} gives
$\ell_i\in L(V_{2i})\subseteq L(V)$. Since the elements
$\ell_i$, $i\in\mathbb N$, generate $L_{\infty}$, both $U$ and $V$
generate $L_{\infty}$.

Any Lie relation among elements of $U$ involves only finitely many
elements. Since the sets $U_n$ form an increasing sequence, these
elements belong to some $U_n$. But $U_n$ is contained in a free
generating set of $L_{\infty}$, so no nonzero Lie relation can hold
among its elements. Thus $U$ is a free set. Since $U$ also generates
$L_{\infty}$, it is a free generating set of $L_{\infty}$. The same
argument shows that $V$ is a free generating set.

For $u\in U$, choose $n$ such that $u\in U_n$ and define
\[
\theta(u)=\theta_n(u).
\]
This definition is independent of the choice of $n$. Indeed, if
$u\in U_r\cap U_n$ and $r<n$, then $U_r\subseteq U_n$ and the
construction gives $\theta_n(u)=\theta_r(u)$. Moreover,
$\theta(u)\in V_n\subseteq V$, so this defines a map
$\theta\colon U\longrightarrow V$.

The map $\theta$ is injective. Indeed, if
$\theta(u)=\theta(u')$, we may choose $n$ such that $u,u'\in U_n$. Then
$\theta_n(u)=\theta_n(u')$, and, since $\theta_n$ is an
automorphism of $L_{\infty}$, we obtain $u=u'$. The map is also
surjective. If $v\in V$, then $v\in V_n$ for some $n$. Since
$V_n=\theta_n(U_n)$, there exists $u\in U_n$ such that
$v=\theta_n(u)=\theta(u)$.

Thus $\theta\colon U\longrightarrow V$ is a bijection. Since $U$
and $V$ are free generating sets of $L_{\infty}$, this bijection
extends uniquely to an automorphism, also denoted by
$\theta\in\Aut(L_{\infty})$.

Let $u\in U$ and choose $n$ such that $u\in U_n$. By the definition
of $\theta$ and \eqref{eq:globalization-compatibility},
\[
\pi(\theta(u))
=\pi(\theta_n(u))
=\alpha(\pi(u)).
\]
Therefore the homomorphisms
$\pi\theta,\alpha\pi\colon L_{\infty}\longrightarrow L_{\infty,c}$
agree on the free generating set $U$, and hence $\pi\theta=\alpha\pi$.
Thus $\theta$ lifts $\alpha$.
\end{proof}

\section{Proof of the Results}\label{sec6}
\subsection{Proof of the Main Theorem}
\begin{proof}[Proof of Theorem \ref{the4}]
We argue by induction on $c$. Suppose first that $c=1$, and let
$\alpha\in\Aut(L_{\infty,1})$. Since $L_{\infty,1}$ is the vector
space with basis $y_1,y_2,\dots$, the formulas defining $\alpha$ on
this basis define an endomorphism $\phi$ of $L_{\infty}$ when the
elements $y_i$ are replaced by $\ell_i$. Applying the same
construction to $\alpha^{-1}$ gives the inverse of $\phi$. Thus
$\phi\in\Aut(L_{\infty})$ and induces $\alpha$.

Assume now that $c \geq 2$ and that the statement holds for $c - 1$. Let $\alpha \in \Aut(L_{\infty, c})$. Recall that the natural projection $\pi_{c} \colon L_{\infty, c} \twoheadrightarrow L_{\infty, c}/\gamma_{c}(L_{\infty, c})$ induces a group homomorphism $\widetilde{\pi}_{c} \colon \Aut(L_{\infty, c}) \longrightarrow \Aut(L_{\infty, c}/\gamma_{c}(L_{\infty, c}))$. Since $L_{\infty, c}/\gamma_{c}(L_{\infty, c}) \cong L_{\infty, c-1}$, by the inductive hypothesis there exists $\phi \in \Aut(L_{\infty})$ inducing $\widetilde{\pi}_{c}(\alpha)$. Let $\alpha^{\prime} \in \Aut(L_{\infty, c})$ be the automorphism induced by $\phi$. Then $\widetilde{\pi}_{c}(\alpha) = \widetilde{\pi}_{c}(\alpha^{\prime})$ and hence $\delta = \alpha \circ (\alpha^{\prime})^{-1} \in \ker \widetilde{\pi}_{c} = \IA(c, L_{\infty, c})$. By Proposition \ref{propo3}, $\delta$ lifts to an automorphism $\widetilde{\delta} \in \Aut(L_{\infty})$. Therefore $\widetilde{\alpha} = \widetilde{\delta} \circ \phi$ is an automorphism of $L_{\infty}$ inducing $\alpha$ on $L_{\infty, c}$. This completes the induction.
\end{proof}

\subsection{Relatively Free Nilpotent Quotients}

For background on varieties of Lie algebras, see \cite{as} or
\cite{bah2}. Fully invariant ideals of a free Lie algebra are precisely
the verbal ideals. Hence, if $V$ is a fully invariant ideal of
$L_{\infty}$, then $L_{\infty}/V$ is relatively free in the
corresponding variety.

Let $V$ be a proper fully invariant ideal of $L_{\infty}$ such that
$L_{\infty}/V$ is nilpotent of class $c$. Then
$\gamma_{c+1}(L_{\infty})\subseteq V$. We first observe that $V$ is
homogeneous. For $\lambda\in K$, let $\delta_\lambda$ be the
endomorphism of $L_{\infty}$ defined by
$\delta_\lambda(\ell_i)=\lambda\ell_i$ for every $i\in\mathbb N$.
If $v=v_1+\cdots+v_m\in V$, where $v_d$ is homogeneous of degree
$d$, then
\[
\delta_\lambda(v)
=\lambda v_1+\lambda^2v_2+\cdots+\lambda^m v_m\in V
\]
for every $\lambda\in K$. Since $K$ is infinite, a Vandermonde
argument gives $v_d\in V$ for every $d$.
Furthermore, $V$ contains no nonzero element of degree $1$. Indeed,
if $0\neq v_1\in V$ were homogeneous of degree $1$, then for every
$j\in\mathbb N$, a suitable endomorphism of $L_{\infty}$ maps $v_1$
to $\ell_j$. Full invariance would therefore give $\ell_j\in V$ for
every $j$, and hence $V=L_{\infty}$, contrary to
$L_{\infty}/V\neq\{0\}$. Thus
$\gamma_{c+1}(L_{\infty})
\subseteq V
\subseteq L_{\infty}'$.

Set $M_{\infty,c}=L_{\infty}/V$ and write $z_i=\ell_i+V$. Let
$\rho_c\colon L_{\infty,c}\twoheadrightarrow M_{\infty,c}$ be defined
by $\rho_c(y_i)=z_i$. Its kernel is
$W=V/\gamma_{c+1}(L_{\infty})\subseteq L_{\infty,c}'$.
Moreover, $W$ is fully invariant in $L_{\infty,c}$. Indeed, every
endomorphism of $L_{\infty,c}$ is induced by an endomorphism of
$L_{\infty}$, and $V$ is fully invariant in $L_{\infty}$. It follows
that $\rho_c$ induces a group homomorphism
$\Aut(L_{\infty,c})\longrightarrow\Aut(M_{\infty,c})$.
Since $W\subseteq L_{\infty,c}'$, it also induces an isomorphism
$\overline{\rho}_c\colon
L_{\infty,c}/L_{\infty,c}'
\longrightarrow
M_{\infty,c}/M_{\infty,c}'$.

\begin{lemma}\label{lemm4}
The homomorphism
$\Aut(L_{\infty,c})\longrightarrow\Aut(M_{\infty,c})$
induced by $\rho_c$ is surjective.
\end{lemma}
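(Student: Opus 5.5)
Given $\psi\in\Aut(M_{\infty,c})$, I will lift it to $\Aut(L_{\infty,c})$ by a free-algebra argument: lift $\psi$ to an endomorphism, then use Lemma \ref{lemm1} to see that this endomorphism is invertible.

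The first step is the lift itself. For each $i$, choose $w_i\in L_{\infty,c}$ with $\rho_c(w_i)=\psi(z_i)$; this is possible because $\rho_c$ is surjective. Since $L_{\infty,c}$ is freely generated by the $y_i$ in the variety of nilpotent Lie algebras of class at most $c$, there is a unique endomorphism $\varphi$ of $L_{\infty,c}$ with $\varphi(y_i)=w_i$. Then $\rho_c\varphi$ and $\psi\rho_c$ are homomorphisms $L_{\infty,c}\to M_{\infty,c}$ that agree on the generators $y_i$, so $\rho_c\varphi=\psi\rho_c$.

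The second step is to show that $\varphi$ is an automorphism. By Lemma \ref{lemm1}, it suffices to show that the map $\bar\varphi$ induced on $L_{\infty,c}/L_{\infty,c}'$ is bijective. Passing to abelianizations, $\rho_c\varphi=\psi\rho_c$ gives $\overline{\rho}_c\bar\varphi=\bar\psi\,\overline{\rho}_c$. Here $\bar\psi$ is the map induced by $\psi$ on $M_{\infty,c}/M_{\infty,c}'$. This map is an automorphism, since $\psi^{-1}$ induces its inverse.

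Since $\overline{\rho}_c$ is an isomorphism, this gives $\bar\varphi=\overline{\rho}_c^{-1}\bar\psi\,\overline{\rho}_c$, which is bijective. Hence $\varphi\in\Aut(L_{\infty,c})$. It induces $\psi$ on $M_{\infty,c}=L_{\infty,c}/W$: the induced map exists because $W$ is fully invariant, and it equals $\psi$ by $\rho_c\varphi=\psi\rho_c$.

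The key point is to apply Lemma \ref{lemm1} to $\varphi$ rather than trying to lift $\psi^{-1}$ separately. If we lift $\psi^{-1}$ to some $\varphi'$, then $\varphi'\varphi$ induces the identity on $M_{\infty,c}$, but it need not be the identity on $L_{\infty,c}$, so that approach alone does not show $\varphi$ is invertible. Lemma \ref{lemm1} applies because $W\subseteq L_{\infty,c}'$, which was shown above using the homogeneity of $V$. That containment is exactly what makes $\overline{\rho}_c$ an isomorphism, and I expect it to be the main subtlety. The rest is routine.
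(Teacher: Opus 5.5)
Your proposal is correct and follows essentially the same route as the paper. You lift each $\psi(z_i)$ to some $w_i$, define the endomorphism $y_i\mapsto w_i$, and get $\rho_c\varphi=\psi\rho_c$. On abelianizations this gives $\bar\varphi=\overline{\rho}_c^{-1}\bar\psi\,\overline{\rho}_c$, and Lemma \ref{lemm1} then shows $\varphi$ is an automorphism.
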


\begin{proof}
Let $\alpha\in\Aut(M_{\infty,c})$, and let $\overline{\alpha}$ denote
the automorphism induced by $\alpha$ on
$M_{\infty,c}/M_{\infty,c}'$. For each $i\in\mathbb N$, choose
$w_i\in L_{\infty,c}$ such that $\rho_c(w_i)=\alpha(z_i)$, and define
an endomorphism $\Phi$ of $L_{\infty,c}$ by $\Phi(y_i)=w_i$. Then
$\rho_c\Phi=\alpha\rho_c$. Consequently, the endomorphism induced by
$\Phi$ on $L_{\infty,c}/L_{\infty,c}'$ is
$\overline{\Phi}
=\overline{\rho}_c^{-1}\,
\overline{\alpha}\,
\overline{\rho}_c$,
and is therefore an automorphism. By Lemma \ref{lemm1},
$\Phi\in\Aut(L_{\infty,c})$. Since
$\rho_c\Phi=\alpha\rho_c$, the image of $\Phi$ under the induced
homomorphism is $\alpha$.
\end{proof}

\begin{proof}[Proof of Corollary \ref{cor1}]
Let $\alpha\in\Aut(M_{\infty,c})$. By Lemma \ref{lemm4}, choose
$\beta\in\Aut(L_{\infty,c})$ inducing $\alpha$. By Theorem
\ref{the4}, the automorphism $\beta$ is induced by an automorphism
$\widetilde{\beta}$ of $L_{\infty}$. Since $V$ is fully invariant,
$\widetilde{\beta}$ preserves $V$ and induces an automorphism of
$L_{\infty}/V=M_{\infty,c}$. Since
$\rho_c\beta=\alpha\rho_c$, this induced automorphism is $\alpha$.
\end{proof}

\end{document}